\documentclass[12pt]{amsart}
\usepackage{amscd,amsmath,amsthm,amssymb}
\usepackage{pstcol,pst-plot,pst-3d}
\usepackage{lmodern,pst-node}
%
%
%
\def\NZQ{\Bbb}               
\def\NN{{\NZQ N}}

\def\ZZ{{\NZQ Z}}

%
%
\def\frk{\frak}               

\def\mm{{\frk m}}

\def\Phi{{\frk n}}
\def\Phi{{\frk N}}
%

%

%
\def\opn#1#2{\def#1{\operatorname{#2}}} 
%
\opn\chara{char} \opn\length{\ell} \opn\pd{pd} \opn\rk{rk}
\opn\projdim{proj\,dim} \opn\injdim{inj\,dim} \opn\rank{rank}
\opn\depth{depth} \opn\grade{grade} \opn\height{height}
\opn\embdim{emb\,dim} \opn\codim{codim}

\opn\Tr{Tr} \opn\bigrank{big\,rank}
\opn\superheight{superheight}\opn\lcm{lcm}
\opn\trdeg{tr\,deg}
\opn\reg{reg} \opn\lreg{lreg} \opn\ini{in} \opn\lpd{lpd}
\opn\size{size}\opn\bigsize{bigsize}
\opn\cosize{cosize}\opn\bigcosize{bigcosize}
\opn\sdepth{sdepth}\opn\sreg{sreg}
\opn\link{link}\opn\fdepth{fdepth}
%
\opn\div{div} \opn\Div{Div} \opn\cl{cl} \opn\Cl{Cl}
%
%
\opn\Spec{Spec} \opn\Supp{Supp} \opn\supp{supp} \opn\Sing{Sing}
\opn\Ass{Ass} \opn\Min{Min}\opn\Mon{Mon} \opn\dstab{dstab} \opn\astab{astab}
%
%
\opn\Ann{Ann} \opn\Rad{Rad} \opn\Soc{Soc}
%
%
\opn\Im{Im} \opn\Ker{Ker} \opn\Coker{Coker} \opn\Am{Am}
\opn\Hom{Hom} \opn\Tor{Tor} \opn\Ext{Ext} \opn\End{End}
\opn\Aut{Aut} \opn\id{id}

\opn\nat{nat}
\opn\pff{pf}
\opn\Pf{Pf} \opn\GL{GL} \opn\SL{SL} \opn\mod{mod} \opn\ord{ord}
\opn\Gin{Gin} \opn\Hilb{Hilb}\opn\sort{sort}
%
%
\opn\aff{aff} \opn\con{conv} \opn\relint{relint} \opn\st{st}
\opn\lk{lk} \opn\cn{cn} \opn\core{core} \opn\vol{vol}
\opn\link{link} \opn\star{star}\opn\lex{lex}
\opn\gr{gr}

%
%

\def\pot#1#2{#1[\kern-0.28ex[#2]\kern-0.28ex]}

%
%
\opn\dirlim{\underrightarrow{\lim}}
\opn\inivlim{\underleftarrow{\lim}}
%
%
%

\let\sect=\cap
\let\dirsum=\oplus

\let\iso=\cong
\let\Union=\bigcup

\let\Dirsum=\bigoplus

%
%
\let\to=\rightarrow
\let\To=\longrightarrow
\def\Implies{\ifmmode\Longrightarrow \else
        \unskip${}\Longrightarrow{}$\ignorespaces\fi}
\def\implies{\ifmmode\Rightarrow \else
        \unskip${}\Rightarrow{}$\ignorespaces\fi}
\def\iff{\ifmmode\Longleftrightarrow \else
        \unskip${}\Longleftrightarrow{}$\ignorespaces\fi}

\let\:=\colon
\newtheorem{Theorem}{Theorem}[section]
\newtheorem{Lemma}[Theorem]{Lemma}
\newtheorem{Corollary}[Theorem]{Corollary}
\newtheorem{Proposition}[Theorem]{Proposition}

\newtheorem{Example}[Theorem]{Example}

%
%
\let\epsilon\varepsilon
\let\kappa=\varkappa
%
%
\textwidth=15cm \textheight=22cm \topmargin=0.5cm
\oddsidemargin=0.5cm \evensidemargin=0.5cm \pagestyle{plain}
%
%
\def\qed{\ifhmode\textqed\fi
      \ifmmode\ifinner\quad\qedsymbol\else\dispqed\fi\fi}
\def\textqed{\unskip\nobreak\penalty50
       \hskip2em\hbox{}\nobreak\hfil\qedsymbol
       \parfillskip=0pt \finalhyphendemerits=0}
\def\dispqed{\rlap{\qquad\qedsymbol}}

%
\opn\dis{dis}
\def\pnt{{\raise0.5mm\hbox{\large\bf.}}}

\opn\Lex{Lex}



\begin{document}
\title {Squarefree monomial ideals with constant depth function  }
\author {J\"urgen Herzog, Marius Vladoiu }

\address{J\"urgen Herzog, Fachbereich Mathematik, Universit\"at Duisburg-Essen, Campus Essen, 45117 Essen, Germany} \email{juergen.herzog@uni-essen.de}

\address{Marius Vladoiu, Faculty of Mathematics and Computer Science, University of Bucharest, Str. Academiei 14, Bucharest, RO-010014, Romania}\email{vladoiu@gta.math.unibuc.ro}

\thanks{This paper was written during the visit of the second author at the Universit\"at Duisburg-Essen, Campus Essen. The second author was supported by a Romanian grant awarded by UEFISCDI, project number $83/2010$, PNII-RU code TE$\_46/2010$, program Human Resources, ``Algebraic modeling of some combinatorial objects and computational applications''.}

\subjclass{13C13, 13A30, 13F99,  05E40}
\keywords{edge ideals, matroidal ideals, simplicial forests, depth functions, analytic spread}
\begin{abstract}
In this paper we study squarefree monomial ideals which have constant depth functions. Edge ideals, matroidal ideals and facet ideals of pure simplicial forests connected in codimension one with this property  are classified.
\end{abstract}
\maketitle

\section*{Introduction}

\bigskip

For a graded ideal $I$ in a polynomial ring $S=K[x_1,\ldots,x_n]$ over a field $K$ the depth function of $I$ is defined to be the numerical function $f:\NN\To\NN$, $k\mapsto \depth S/I^k$. This depth function has been studied by several authors in \cite{HH1},\cite{BHH},\cite{HRV}. One of the main problems in this context is to characterize those numerical functions which are the depth functions of a graded ideal. The answer to this problem is widely open. On the other hand by a classical result of Brodmann \cite{Br1} any depth function is eventually constant. In other words, for any graded ideal $I\subset S$ there exists an integer $t_0$ such that $\depth S/I^t$ is constant for all $t\geq t_0$. We call this constant depth by limit depth and denoted by $\lim_{t\to \infty}\depth S/I^t$. Brodmann's theorem is actually valid for any ideal in a Noetherian local ring. However in this paper we restrict ourselves to the case of monomial ideals. Though the depth function is not well understood in general, it has been shown in \cite{HH1} that any bounded non--decreasing numerical function is the depth function of a suitable monomial ideal. In contrast to this result it is expected by several authors \cite{FHV} that the depth function of a squarefree monomial ideal is a non--increasing numerical function. For this statement it is important to require that the ideal $I$ is indeed squarefree, because it has been recently shown \cite{BHH} that if the monomial $I$ is not squarefree, then its depth function may have any number of local maxima.

In this paper, we aim at classifying those monomial ideals whose depth function is constant. Such ideals will be called ideals with constant depth functions. By Brodmann's theorem \cite{Br1} any high enough power of an ideal has a constant depth function. Therefore, we restrict our classification problem to squarefree monomial ideals. In this case $\depth S/I\geq \depth S/I^k$ simply because $\depth \sqrt{J}\geq\depth J$ for any monomial ideal $J$, see for example \cite[Theorem 2.6]{HTT}. In the same paper \cite{HTT} and also in \cite{AE} it is studied the question, related to our problem, when certain classes of monomial ideals with a given radical have the same depth. Considering the powers of an ideal $I$, as done in this paper, there is a classical result by Waldi \cite[Korollar 1]{W}, which asserts that if $I$ is generically complete intersection and all powers of $I$ have maximal depth, that is,  they are Cohen--Macaulay, then $I$ is a complete intersection. On the other hand, the class of squarefree monomial ideals with constant depth functions, whose powers are not Cohen--Macaulay, is much larger.

In the first section of this paper we describe a method of constructing squarefree monomial ideals with constant depth function. In Theorem~\ref{constsumprod} it is shown that if $I$ and $J$ are squarefree monomial ideals in disjoint sets of variables whose Rees rings are Cohen--Macaulay then  $I+J$ has a  constant depth function if and only if $I$ and $J$ have this property. A similar statement holds for $IJ$. The proof of this result is less obvious than one might expect. Even in the simple case that  $f$ is a non--zero divisor modulo $I$ it is not clear to us how the depth function of the ideal $I$ is related to that of $(I,f)$. The proof of Theorem~\ref{constsumprod} relies on the following fact, presented  in Corollary~\ref{constlim}, where it is stated that a monomial ideal $I$, whose Rees ring is Cohen--Macaulay has a constant depth function if and only if $\depth S/I=n-\ell(I)$,  where $\ell(I)$ denotes the analytic spread of $I$. This criterion is an immediate consequence of a result \cite[Proposition 3.3]{EH} of Eisenbud and Huneke.

By an iterated application of Theorem~\ref{constsumprod} one obtains as a special case the following class $\mathcal{C}$ of ideals with constant depth function: $I\in\mathcal{C}$ if and only if $I=I_1+\cdots +I_k$, where the ideals $I_j$ are defined in pairwise disjoint sets of variables and where each $I_j$ itself is a product of monomial prime ideals in pairwise disjoint sets of variables. Unfortunately, as shown in Example~\ref{bad} not all squarefree monomial ideals with constant depth functions are of the form described in Example~\ref{good}. The more it is surprising that any edge ideal (Theorem~\ref{constantdepth}), any matroidal ideal (Theorem~\ref{criterionmatroidconstdepth}), as well as any facet ideal of a pure simplicial forest connected in codimension one (Theorem~\ref{consttree}) with constant depth function belongs to  the class $\mathcal{C}$. This is the content of Section 2.

\bigskip

\section{Construction of squarefree monomial ideals with constant depth function}

\bigskip

Throughout this paper we denote by $S=K[x_1,\ldots,x_n]$ the polynomial ring in $n$ variables over the field $K$, and by $\mm$ the graded maximal ideal of $S$. Moreover if $I$ is a monomial ideal of $S$ we denote as usual by $G(I)$ the unique set of minimal monomial generators of $I$. The main purpose of this section is to prove the following

\begin{Theorem}
\label{constsumprod}
Let $I,J$ be monomial ideals of $S$ generated  in disjoint sets of variables with the property that $\mathcal{R}(I)$ and $\mathcal{R}(J)$ are Cohen--Macaulay. Then $\mathcal{R}(I+J)$ and $\mathcal{R}(IJ)$ are Cohen--Macaulay. Moreover, the following conditions are equivalent:
\begin{enumerate}
\item[(i)] the depth functions of $I$ and  $J$ are constant;
\item[(ii)] the depth function of $I+J$ is constant;
\item[(iii)] the depth function of $IJ$ is constant.
\end{enumerate}
\end{Theorem}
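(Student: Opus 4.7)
The plan is to reduce all three equivalences to a single additive identity by means of Corollary~\ref{constlim}, which asserts that a monomial ideal $K$ with $\mathcal{R}(K)$ Cohen--Macaulay has constant depth function if and only if $\depth S/K = n-\ell(K)$. The proof then proceeds in three stages: establishing the Cohen--Macaulay property of $\mathcal{R}(I+J)$ and $\mathcal{R}(IJ)$, so that the criterion applies; computing $\depth S/K$ and $\ell(K)$ in terms of the corresponding data for $I$ and $J$ (for $K\in\{I+J,IJ\}$); and combining.

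For the Rees algebras, the disjoint-variables hypothesis forces the defining toric binomial relations of $\mathcal{R}(I+J)$ to split cleanly into the $I$-relations and the $J$-relations, yielding a bigraded isomorphism
\begin{equation*}
\mathcal{R}(I+J)\iso \mathcal{R}_{S_1}(I)\otimes_K \mathcal{R}_{S_2}(J),
\end{equation*}
where $S_i$ denotes the polynomial subring in the variables of the respective ideal. Since the tensor product over $K$ of two Cohen--Macaulay finitely generated graded $K$-algebras is Cohen--Macaulay, this settles $\mathcal{R}(I+J)$. The Rees algebra $\mathcal{R}(IJ)$ then arises as the diagonal (Segre) subalgebra of $\mathcal{R}(I+J)$ under its bigrading, since $(IJ)^k=I^kJ^k$ in disjoint variables. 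It is moreover an $\mathcal{R}(IJ)$-module direct summand of $\mathcal{R}(I+J)$, and the Cohen--Macaulay property is transferred to this diagonal via a local cohomology argument exploiting the summand decomposition.

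For the numerical data, the fiber cones satisfy $F(I+J)\iso F(I)\otimes_K F(J)$ (the fiber cone depends only on the generators) and $F(IJ)\iso F(I)\ast F(J)$ (Segre product), yielding $\ell(I+J)=\ell(I)+\ell(J)$ and $\ell(IJ)=\ell(I)+\ell(J)-1$. Letting $e=n-n_1-n_2$ count the variables appearing in neither ideal, the tensor decomposition $S/(I+J)\iso (S_1/I)\otimes_K(S_2/J)\otimes_K K[\text{remaining variables}]$ yields $\depth S/(I+J)=\depth S_1/I+\depth S_2/J+e$. From the identity $I\cap J=IJ$ (valid in disjoint variables) one obtains the short exact sequence
\begin{equation*}
0\To S/IJ\To S/I\oplus S/J\To S/(I+J)\To 0,
\end{equation*}
and since $\depth(S/I\oplus S/J)>\depth S/(I+J)$ whenever $I$ and $J$ are proper nonzero, the depth lemma forces $\depth S/IJ=\depth S/(I+J)+1$.

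Substituting everything into Corollary~\ref{constlim}, both equalities $\depth S/(I+J)=n-\ell(I+J)$ and $\depth S/IJ=n-\ell(IJ)$ reduce to the single identity
\begin{equation*}
\depth S_1/I+\depth S_2/J=(n_1-\ell(I))+(n_2-\ell(J)),
\end{equation*}
since on both sides the summands $e$, and the extra $+1$ on each side in the $IJ$ case, cancel. The one-sided bounds $\depth S_i/(\text{ideal})\geq n_i-\ell(\text{ideal})$ provided by Corollary~\ref{constlim} then make this additive identity equivalent to the two individual equalities, i.e., to $I$ and $J$ each having constant depth function, giving (i)$\Leftrightarrow$(ii)$\Leftrightarrow$(iii). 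The main obstacle is the Cohen--Macaulay property of $\mathcal{R}(IJ)$: Segre products of Cohen--Macaulay graded algebras are in general not Cohen--Macaulay, so this step cannot be inferred directly from the tensor-product argument that handles $\mathcal{R}(I+J)$, and one must exploit the explicit toric presentation and the direct summand decomposition coming from the bigrading to push the Cohen--Macaulay property down to the diagonal.
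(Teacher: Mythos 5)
Your reduction of the three equivalences to the single additive identity $\depth S_1/I+\depth S_2/J=(n_1-\ell(I))+(n_2-\ell(J))$ is essentially the paper's argument: the paper likewise combines Corollary~\ref{constlim} with $\ell(I+J)=\ell(I)+\ell(J)$, $\ell(IJ)=\ell(I)+\ell(J)-1$, the additivity of depth for $S/(I+J)$, the Depth Lemma applied to $0\to S/IJ\to S/I\oplus S/J\to S/(I+J)\to 0$, and the one-sided bounds $\depth S_i/(\cdot)\ge n_i-\ell(\cdot)$ to squeeze out (iii)$\Rightarrow$(i). Your fiber-cone identifications $F(I+J)\iso F(I)\otimes_K F(J)$ and $F(IJ)\iso F(I)\ast F(J)$ are correct but not free: the first requires showing $G((I+J)^k)=\bigsqcup_{i+j=k}G(I^i)G(J^j)$, which is exactly the divisibility argument the paper carries out in Proposition~\ref{addanalytic}. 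These are minor omissions.

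The genuine gap is in your treatment of the Rees algebras. The claimed isomorphism $\mathcal{R}(I+J)\iso\mathcal{R}_{S_1}(I)\otimes_K\mathcal{R}_{S_2}(J)$ is false: already for $I=(x)$, $J=(y)$ in $K[x,y]$ the left side is $K[x,y,xt,yt]$ of Krull dimension $3=n+1$, while the right side is a polynomial ring in $4$ variables; in general $\dim\mathcal{R}(I+J)=n+1$ whereas the tensor product has dimension $n+2$. What the tensor product actually computes is the \emph{multi-Rees algebra} $S[It_1,Jt_2]=\bigoplus_{i,j}I^iJ^jt_1^it_2^j$ (note that $(I+J)^k=\sum_{i+j=k}I^iJ^j$ is a sum inside $S$, not a direct sum), and it is this bigraded ring, not $\mathcal{R}(I+J)$, whose diagonal is $\mathcal{R}(IJ)$. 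Consequently your proof of the Cohen--Macaulayness of $\mathcal{R}(I+J)$ collapses, and the diagonal/Segre argument for $\mathcal{R}(IJ)$ is anchored to the wrong ambient ring; moreover, even with the correct ambient ring, the descent of Cohen--Macaulayness to the diagonal is not supplied by the direct-summand observation alone (the extension is not module-finite, and summands of Cohen--Macaulay rings need not be Cohen--Macaulay), so a real argument is still needed there. The paper does not prove these two facts either: it invokes them as known results, citing \cite[Theorem 4.7]{SVV} for $\mathcal{R}(I+J)$ and \cite[Corollary 2.10]{Hy} for $\mathcal{R}(IJ)$. If you intend to prove them rather than cite them, the tensor-product shortcut must be replaced by those (nontrivial) arguments.
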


Starting with monomial prime ideals and applying Theorem~\ref{constsumprod} iteratively one obtains the following family of  monomial ideals,  described in the next corollary, whose depth function is constant.

To describe this family we first define the set $\mathcal{A}$ whose elements are collections $A=\{A_1,\ldots,A_r\}$ of subsets of $[s]$ (including the empty collection) satisfying:
\begin{enumerate}
\item[(i)] $A\in\mathcal{A}$ if $|A_i|=1$ for $i=1,\ldots,r$.
\item[(ii)] For each $j\in [s]$ set $A(j)=\{i\in [r]\:\; j\in A_i\}$. Then there exists $j\in [s]$ such that
\[
\Union_{i\in A(j)}(A_i\setminus\{j\})\sect\Union_{i\not\in A(j)}A_i=\emptyset,
\]
and the collections $\{A_i\setminus\{j\}\:\; i\in A(j)\}$ and $\{A_i\:\; i\not\in A(j)\}$ belong again to $\mathcal{A}$.
\end{enumerate}

\begin{Corollary}
\label{hope}
Let $P_1,\ldots, P_s$ be monomial prime ideals in pairwise disjoint sets of variables, and let $\{A_1,\ldots, A_r\}$ be a collection of subsets of $[s]$ belonging to the set $\mathcal{A}$,  described before. Then the monomial ideal
\[
I=I_1+I_2+\cdots +I_r \quad \text{with} \quad I_j=\prod_{i\in A_j}P_i \quad\text{for}\quad j=1,\ldots,r
\]
has a constant depth function.
\end{Corollary}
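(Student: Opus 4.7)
The plan is to induct on the total size $N(A):=\sum_{i=1}^{r}|A_i|$ of the collection $A=\{A_1,\ldots,A_r\}\in\mathcal{A}$, maintaining at every stage two properties in tandem: that the resulting ideal has constant depth function \emph{and} that its Rees ring is Cohen--Macaulay. Keeping both alive throughout the induction is essential, because Theorem~\ref{constsumprod} requires the Cohen--Macaulay hypothesis on each constituent factor before it can output either conclusion for the sum or the product.

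\textbf{Base case: all $|A_i|=1$, or the collection is empty.} Here each $I_j$ is a single monomial prime $P_{i_j}$, and $I=\sum_j P_{i_j}$ is a sum of monomial primes in pairwise disjoint sets of variables. Each such prime is generated by variables, hence is a complete intersection, so its powers are all Cohen--Macaulay of the same depth and its Rees ring is Cohen--Macaulay. An iterated application of the sum version of Theorem~\ref{constsumprod} then settles this case.

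\textbf{Inductive step.} Assume some $|A_i|>1$, and pick $j\in[s]$ as guaranteed by condition (ii). Since every $I_i$ with $i\in A(j)$ has $P_j$ as a factor, distributivity gives
\[
\sum_{i\in A(j)}I_i \;=\; P_j\cdot J, \qquad J:=\sum_{i\in A(j)}\prod_{\ell\in A_i\setminus\{j\}}P_\ell,
\]
so writing $I''=\sum_{i\notin A(j)}\prod_{\ell\in A_i}P_\ell$ we obtain $I=P_jJ+I''$. The two sub-collections $\{A_i\setminus\{j\}: i\in A(j)\}$ and $\{A_i: i\notin A(j)\}$ both lie in $\mathcal{A}$ and have strictly smaller $N$ (removing $j$ from the $A_i$ with $i\in A(j)$ drops the size by $|A(j)|\geq 1$), so by the induction hypothesis $J$ and $I''$ have constant depth function and Cohen--Macaulay Rees rings. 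Since $j\notin A_i\setminus\{j\}$, the primes appearing in $P_j$ and in $J$ use disjoint variables, so the product case of Theorem~\ref{constsumprod} transfers both properties to $I':=P_jJ$. Finally, the disjointness assertion in (ii) together with the fact that $j\notin A_i$ for $i\notin A(j)$ forces $\bigcup_{i\in A(j)}A_i$ and $\bigcup_{i\notin A(j)}A_i$ to be disjoint, whence $I'$ and $I''$ are supported on disjoint variables; the sum case of Theorem~\ref{constsumprod} then completes the step.

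\textbf{Main obstacle.} The only real subtlety is bookkeeping: at every combine step one must carry the Cohen--Macaulayness of the Rees ring alongside the constancy of the depth function, since Theorem~\ref{constsumprod} demands both as input. Verifying the disjoint-variables condition at each stage is then mechanical once one reads off exactly what clause (ii) asserts about the index supports.
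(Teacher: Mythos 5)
Your proof is correct and is essentially the paper's own argument: the paper obtains Corollary~\ref{hope} precisely by iterating Theorem~\ref{constsumprod} along the recursive structure of $\mathcal{A}$, which is what your induction on $\sum_i|A_i|$ formalizes, using the decomposition $I=P_jJ+I''$ dictated by condition (ii). Your point about carrying the Cohen--Macaulayness of the Rees rings through each step (supplied by the first assertion of Theorem~\ref{constsumprod}) is exactly the bookkeeping that makes the iteration legitimate.
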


The following examples demonstrate this construction.

\begin{Example}\label{good}
{\em (i) Let $P_1,\ldots,P_8\subset S$ be monomial prime ideals in pairwise disjoint sets of variables, and let $I$ be the following ideal of $S$
\[
I=P_1P_2P_5P_8+P_1P_3P_5P_8+P_4P_5P_8+P_6P_7P_8.
\]
The depth function of $I$ is constant since it is an ideal as described in Corollary~\ref{hope}, as can be seen from the following presentation
\[
I=P_8(P_5(P_1(P_2+P_3)+P_4)+P_6P_7).
\]

(ii) We denote by $\mathcal{C}$ the family of those monomial ideals which are defined as in Corollary~\ref{hope} by  collections  $\{A_1,\ldots,A_r\}$  of subsets of $[s]$ with $A_i\sect A_j=\emptyset$ for $i\neq j$. Since such collections obviously belong to $\mathcal{A}$, it follows that all monomial ideals in $\mathcal{C}$ have a constant depth function.
}
\end{Example}

 It will be shown in the next section that any squarefree monomial ideal generated in degree $2$ belongs to the family $\mathcal{C}$  described in Example~\ref{good}. However, this is no longer the case for squarefree monomial ideals generated in degree $3$. The following example does not even belong to the larger class of monomial ideals described in Corollary~\ref{hope}.

\begin{Example}\label{bad}
{\em The ideal $I=(x_1x_2x_3, x_3x_4x_5, x_1x_5x_6)\subset S=K[x_1,\ldots,x_6]$ has the property that $\depth S/I^k=3$ for all $k$, and does not belong to any of the families of monomial ideals described before. It will be explained after Corollary~\ref{constlim} why the depth function of this ideal is constant.}
\end{Example}

In order to prove Theorem~\ref{constsumprod} we need some preparations and to recall some basic facts. One of this facts is the theorem of Burch \cite{Burch}
\[
\lim_{t\to \infty}\depth S/I^t\leq n-\ell(I),
\]
where $\ell(I)$ is the {\em analytic spread} of $I$. In other words $\ell(I)$ is the Krull dimension of the {\em fiber ring} $\mathcal{\overline{R}}(I)=\mathcal{R}(I)/\mm\mathcal{R}(I)$ of the Rees ring $\mathcal{R}(I)$ of $I$. We will also use the result, due to Eisenbud and Huneke \cite[Proposition 3.3]{EH}, which says that equality holds in the Burch inequality  if the associated graded ring of $I$ is Cohen--Macaulay, which for example is the case if   $\mathcal{R}(I)$ is Cohen--Macaulay, see \cite{Hu}. Since we want to apply these results we have to understand how the analytic spread behaves with respect to sum and product of monomial ideals in disjoint sets of variables. In the special case of two monomial ideals each of them generated in a single degree, the following proposition regarding the sum was observed in \cite[Lemma 3.4]{BMV}.

\begin{Proposition}
\label{addanalytic}
Let $I,J$ be monomial ideals of $S$ generated in disjoint sets of variables. Then  $\ell(I+J)=\ell(I)+\ell(J)$ and $\ell(IJ)=\ell(I)+\ell(J)-1$.
\end{Proposition}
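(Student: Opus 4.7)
The plan is to compute both analytic spreads by identifying the fiber rings $F(I+J)$ and $F(IJ)$ explicitly in terms of $F(I)$ and $F(J)$, and then applying standard dimension formulas. Recall that $\ell(L)=\dim F(L)$, where $F(L)=\Rees(L)/\mm\Rees(L)=\Dirsum_{k\geq 0}L^k/\mm L^k$, and that for a monomial ideal $L$ the $k$-th graded component $F(L)_k$ admits a $K$-basis indexed by $G(L^k)$.

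The crucial observation is that, since $I$ and $J$ are supported on disjoint variable sets, the minimal monomial generators of powers factor uniquely. Concretely, I would first establish
\[
G\bigl((I+J)^k\bigr)=\Union_{i+j=k}\bigl\{uv : u\in G(I^i),\ v\in G(J^j)\bigr\}
\]
(a disjoint union), using $(I+J)^k=\sum_{i+j=k}I^iJ^j$ together with the fact that a monomial in the combined variables decomposes uniquely as $u\cdot v$ with $u$ using only the variables of $I$ and $v$ only those of $J$; and analogously
\[
G\bigl((IJ)^k\bigr)=\bigl\{uv : u\in G(I^k),\ v\in G(J^k)\bigr\}
\]
from $(IJ)^k=I^kJ^k$ and the same uniqueness.

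From these bases one reads off graded $K$-algebra isomorphisms $F(I+J)\iso F(I)\tensor_K F(J)$ and $F(IJ)\iso F(I)\,\#\,F(J)$, where $\#$ denotes the Segre product. The ring multiplications match because $(uv)(u'v')=(uu')(vv')$ by disjointness of variables, so on monomial bases the multiplication in $F(I+J)$ (respectively $F(IJ)$) coincides with that of the tensor (respectively Segre) product. The claimed equalities then follow from the classical dimension formulas $\dim(A\tensor_K B)=\dim A+\dim B$ and $\dim(A\,\#\,B)=\dim A+\dim B-1$, both valid for standard graded $K$-domains of positive dimension; these apply here because each fiber ring of a nonzero monomial ideal is a positively graded toric domain generated in degree one over $K$.

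The main subtle point is verifying that the natural bijections of monomial bases truly induce ring isomorphisms rather than only graded vector space isomorphisms; once multiplicativity is checked on generators, the rest is automatic. All of the dimension theory invoked at the end is classical (the Segre product formula goes back to Goto--Watanabe), so no further work is needed there.
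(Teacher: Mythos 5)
Your proposal follows essentially the same route as the paper, just dressed in more structural language: the paper proves exactly your two generator identities (and note that the minimality of $uv$, $u\in G(I^i)$, $v\in G(J^{k-i})$, inside $(I+J)^k$ is not quite a consequence of unique factorization alone -- one needs the small divisibility argument that if $u'v'\mid uv$ with $u'\in G(I^j)$, $v'\in G(J^{k-j})$, $j\neq i$, then either $u'\mid u$ forces $u\in I^j\subseteq\mm I^i$ or $v'\mid v$ forces $v\in\mm J^{k-i}$, contradicting minimality; the paper spells this out), and then, instead of naming the tensor and Segre products, it records the resulting Hilbert function identities $H_{I+J}(k)=\sum_i H_1(i)H_2(k-i)$ and $H_{IJ}(k)=H_1(k)H_2(k)$ and reads off $\ell$ from the pole order of the Hilbert series at $t=1$. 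Your isomorphisms $F(I+J)\iso F(I)\tensor_K F(J)$ and $F(IJ)\iso F(I)\,\#\,F(J)$ are correct and are just the ring-theoretic upgrade of these numerical identities; they buy a cleaner statement but are not needed for the dimension count.

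The one incorrect assertion is your justification for applying the Segre dimension formula: the fiber ring of a monomial ideal is in general neither a toric domain nor even reduced. For instance, for $I=(x^2,y^2,xyz)\subset K[x,y,z]$ the class of $xyz$ in $I/\mm I$ is a nonzero nilpotent of the fiber ring, since $(xyz)^2=z^2\cdot x^2y^2\in\mm I^2$ while $xyz\notin\mm I$. Fortunately the domain hypothesis is superfluous: $\dim(A\tensor_K B)=\dim A+\dim B$ holds for arbitrary finitely generated $K$-algebras, and for standard graded $K$-algebras $A,B$ of positive Krull dimension the Hilbert series of $A\,\#\,B$ is $\sum_k H_A(k)H_B(k)t^k$, whose pole order at $t=1$ is $\dim A+\dim B-1$ -- which is precisely the computation the paper performs. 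Since $\ell(I),\ell(J)\geq 1$ for nonzero ideals (Nakayama gives $I^k\neq\mm I^k$ for all $k$), the positivity hypothesis is satisfied, so your argument closes once the appeal to ``toric domain'' is replaced by this Hilbert series reasoning.
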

\begin{proof}
We denote by $H_1,H_2$ the Hilbert functions of $\mathcal{\overline{R}}(I)$, respectively $\mathcal{\overline{R}}(J)$, that is,  $H_1(k)=\dim_{K}I^k/\mm I^k$ for all $i$, and similarly for $H_2(K)$. By Hilbert's theorem \cite[Theorem 6.1.3]{HH} we have
\[
\sum_{k\geq 0} H_1(k)t^k = \frac{Q_1(t)}{(1-t)^{\ell(I)}}\quad \text{and}\quad H_2(k)t^k = \frac{Q_1(t)}{(1-t)^{\ell(J)}}
\]
for $i=1,2$, where $Q_1,Q_2\in\ZZ[t]$ with $Q_1(1)>0$ and $Q_2(1)>0$. In order to prove the sum formula we notice  that $$\sum_{k\geq 0}(\sum_{i=0}^k H_1(i)H_2(k-i))t^k=\frac{Q_1(t)Q_2(t)}{(1-t)^{\ell(I)+\ell(J)}}.$$
Since $Q_1(1)Q_2(1)>0$ we obtain that $\ell(I+J)=\ell(I)+\ell(J)$, provided that the Hilbert function $H$ of $\mathcal{\overline{R}}(I+J)$ satisfies $H(k)=\sum_{i=0}^k H_1(i)H_2(k-i)$. The latter statement is equivalent to proving
\[
G(I^iJ^{k-i})=G(I^i)G(J^{k-i})\subset G((I+J)^k)
\]
for each $i$, with $0\leq i\leq k$, where as usual $G(L)$ denotes the unique minimal monomial set of generators of a monomial ideal $L$. The equality is an immediate consequence of the fact that $I,J$ are monomial ideals in disjoint sets of variables. In order to prove the above inclusion we argue by contradiction. Suppose that there exists a monomial $w\in G(I^iJ^{k-i})\setminus G((I+J)^k)$. Then there exists an integer $j$ with $i\neq j$ such that $w\in I^j J^{k-j}$. Therefore there exist $u_l\in G(I^l), v_l\in G(J^{k-l})$ for $l=i,j$ such that $w=u_iv_i$ and $w$ is divisible by $u_jv_j$. On the other hand, since $I,J$ are monomial ideals in disjoint sets of variables, it follows that $u_jv_j$ divides $u_iv_i$ if and only if $u_j$ divides $u_i$ and $v_j$ divides $v_i$. These two relations of divisibility cannot hold simultaneously since $i\neq j$. Indeed, if $i<j$ then $u_j$ does not divide $u_i$ and if $i>j$ then $v_j$ does not divide $v_i$. Hence, we obtain a contradiction to our assumption and we are done.

For the statement concerning $IJ$, let us denote by $H'$ the Hilbert function of $\mathcal{\overline{R}}(IJ)$. One can easily see that $H'(k)=H_1(k)H_2(k)$ for all $k$, which implies that the same equality holds for the corresponding Hilbert polynomials. Passing to the degrees one obtains the desired equality.
\end{proof}

We recall the following result \cite[Proposition 3.3]{EH} of Eisenbud and Huneke which,  for the convenience of the reader,  we restate it in the frame and terminology introduced so far.

\begin{Proposition}
\label{normal}
Let $I$ be a monomial ideal of $S$ such that $\mathcal{R}(I)$ is Cohen--Macaulay. Then $\min\{\depth S/I^t \: t\geq 1\}=n-\ell(I)$. Moreover,  if  $l$ is the minimum integer such that $\depth S/I^l=\min\{\depth S/I^t \: t\geq 1\}$,  then  $\depth S/I^t=\depth S/I^l$ for all $t\geq l$.  In particular,   $\lim_{t\to \infty}\depth S/I^t=\min\{\depth S/I^t \: t\geq 1\}=n-\ell(I)$.
\end{Proposition}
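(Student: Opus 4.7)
The plan is to combine two classical ingredients: Burch's inequality $\lim_{t\to\infty}\depth S/I^t\leq n-\ell(I)$ recalled earlier, and Huneke's theorem \cite{Hu}, which transfers the Cohen--Macaulay property from $\mathcal{R}(I)$ to $\gr_I(S)$. The goal is to promote these to the precise statement that the depth function attains the value $n-\ell(I)$ at some index $l$ and remains constant from there on.

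First, I would invoke Huneke's theorem to conclude that $\gr_I(S)=\Dirsum_{t\geq 0} I^t/I^{t+1}$ is Cohen--Macaulay of Krull dimension $n$. Analyzing the local cohomology of $\gr_I(S)$ with respect to $\mm$ via the Sancho de Salas sequence (which relates $\Coh{i}{\mathcal{R}(I)}$, $\Coh{i}{\gr_I(S)}$, and $\Coh{i}{S}$ through a natural long exact sequence), one extracts depth estimates for each graded piece $I^t/I^{t+1}$ viewed as an $S$-module, in a uniform way across $t$.

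Second, I would apply the long exact sequence of local cohomology to the short exact sequences
\[
0 \To I^t/I^{t+1} \To S/I^{t+1} \To S/I^t \To 0
\]
and compare $\depth S/I^{t+1}$ with $\depth S/I^t$. The depth estimates on $I^t/I^{t+1}$ from the first step yield the stabilization statement: once $\depth S/I^l$ attains the minimum at some index $l$, the same value is forced for $\depth S/I^t$ at every $t\geq l$. To identify this minimum as exactly $n-\ell(I)$, I would combine Burch's inequality (which gives the upper bound) with the existence of a minimal reduction $J\subseteq I$ generated by $\ell(I)$ elements; the Cohen--Macaulay property of $\mathcal{R}(I)$ guarantees that these generators behave like a regular sequence modulo high powers of $I$, delivering the matching lower bound $n-\ell(I)$.

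The main obstacle is the first step: converting the global Cohen--Macaulay-ness of $\gr_I(S)$ into uniform depth bounds on the individual summands $I^t/I^{t+1}$ is what requires the subtler bigraded or filtered arguments on $\mathcal{R}(I)$, and is the technical heart of the Eisenbud--Huneke argument. Once those bounds are in hand, the remainder of the proof is essentially a diagram chase in local cohomology, combined with the observation that a sequence of non-negative integers whose limit equals its lower bound must attain that bound and then remain there.
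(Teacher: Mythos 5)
Your route to the equality $\min_t\depth S/I^t=n-\ell(I)$ is workable in principle: from the Cohen--Macaulayness of $\gr_I(S)$ one can choose $n-\ell(I)$ elements of $\mm$ of degree zero forming a regular sequence on $\gr_I(S)$ (its grade with respect to $\mm\gr_I(S)$ is $\dim\gr_I(S)-\dim\gr_I(S)/\mm\gr_I(S)=n-\ell(I)$), these are then regular on every graded piece $I^t/I^{t+1}$, and the short exact sequences $0\To I^t/I^{t+1}\To S/I^{t+1}\To S/I^t\To 0$ give $\depth S/I^t\geq n-\ell(I)$ for all $t$, which together with Burch's inequality pins down the minimum. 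You leave this step as an acknowledged black box, but the more serious problem is the ``Moreover'' clause: that the depth is constant from the \emph{first} index $l$ at which the minimum is attained is not proved by anything in your outline. The Depth Lemma applied to those exact sequences only ever produces lower bounds, so it cannot exclude $\depth S/I^{l+1}>\depth S/I^l=n-\ell(I)$; and your closing ``observation'' about integer sequences is false: the sequence $2,1,3,1,1,1,\dots$ has limit equal to its minimum and attains that minimum at the second term, yet rises again afterwards. The stabilization from $l$ onward is genuine mathematical content, not a formal consequence of $\lim=\min$.

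The paper establishes precisely this point by a different mechanism. If $k=\min_t\depth S/I^t=0$, it invokes the persistence property $\Ass(S/I^t)\subseteq\Ass(S/I^{t+1})$, valid here by \cite[Theorem 1.4]{HQ} since $\mathcal{R}(I)$ is Cohen--Macaulay: once $\mm\in\Ass(S/I^l)$ it remains associated to all higher powers, so the depth stays $0$. If $k\geq 1$, it reduces to this case by choosing, via Brodmann's finiteness of $\bigcup_{t\geq 1}\Ass(S/I^t)$ together with $\grade(\mm,\mathcal{R}(I))=n+1-\ell(I)\geq k+1$, a sequence of length $k$ in $\mm$ that is regular on $S/I^t$ for every $t$ simultaneously; factoring it out preserves the Cohen--Macaulayness of the Rees algebra and lowers all depths by exactly $k$. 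To complete your proof you would need an argument of this kind (or some substitute, e.g.\ tracking where the local cohomology module $H^{n-\ell(I)}_{\mm}(S/I^t)$ is nonzero) for the monotone-after-$l$ claim; the uniform depth bounds on $I^t/I^{t+1}$ alone do not deliver it.
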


The following corollary will be crucial for the further considerations.

\begin{Corollary}
\label{constlim}
Let $I\subset S$ be a monomial ideal  such that $\mathcal{R}(I)$ is Cohen--Macaulay. Then the depth function of $I$ is constant if and only if $\depth S/I=n-\ell(I)$.
\end{Corollary}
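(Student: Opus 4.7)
The plan is to reduce the corollary directly to Proposition~\ref{normal}, which is available to us since $\mathcal{R}(I)$ is Cohen--Macaulay. Proposition~\ref{normal} already supplies three key facts: the minimum of the depth function equals $n-\ell(I)$, this minimum is attained at some smallest index $l$, and once attained the depth function remains constant for all $t\geq l$. The corollary then amounts to saying that ``the depth function is constant'' is equivalent to ``this minimum is attained already at $t=1$''.

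For the implication ``$\Leftarrow$'', I would assume $\depth S/I=n-\ell(I)$. By Proposition~\ref{normal} the right hand side equals $\min\{\depth S/I^t:\; t\geq 1\}$, so $\depth S/I^1$ realizes the minimum of the depth function. Hence the integer $l$ in the Proposition can be taken to be $l=1$, and therefore $\depth S/I^t=\depth S/I$ for every $t\geq 1$. This is exactly the constancy of the depth function.

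For the implication ``$\Rightarrow$'', I would assume the depth function is constant, say $\depth S/I^t=c$ for all $t\geq 1$. Then in particular $c=\depth S/I=\lim_{t\to\infty}\depth S/I^t$, and Proposition~\ref{normal} identifies the latter limit with $n-\ell(I)$. Thus $\depth S/I=n-\ell(I)$, as required.

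There is no real obstacle here: the content of the corollary is entirely packed into Proposition~\ref{normal}, and the argument is only a matter of noting that the depth function, being non-increasing along the initial segment before it stabilizes at the minimum, is constant precisely when it already starts at that minimum value. The only mild point to keep in mind while writing is that one should not need the depth function to be monotone in any a priori sense; both directions use just the two explicit statements from Proposition~\ref{normal} (value of the minimum, and stabilization from the smallest attaining index onwards).
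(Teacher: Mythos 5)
Your proof is correct and follows essentially the same route as the paper: both directions are read off directly from Proposition~\ref{normal}, with the ``if'' direction forcing the minimizing index $l$ to be $1$ and the ``only if'' direction identifying the constant value with $\min\{\depth S/I^t \: t\geq 1\}=n-\ell(I)$. No gaps.
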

\begin{proof}
It follows from Proposition~\ref{normal} that if $\depth S/I=n-\ell(I)$, then  $\depth S/I=\min\{\depth S/I^t \: t\geq 1\}$. Therefore, the minimum integer $l$ such that $\depth S/I^l=\min\{\depth S/I^t \: t\geq 1\}$ is one.  Applying again Proposition~\ref{normal} we obtain that  $\depth S/I=\depth S/I^t$ for all $t\geq 1$. Hence the depth function of $I$ is constant. The other implication is obvious, due to the fact that $\min\{\depth S/I^t \: t\geq 1\}=n-\ell(I)$.
\end{proof}

Coming back to Example~\ref{bad} it can be easily checked, for example by using  CoCoA~\cite{Co},  that  $n-\ell(I)=6- 3=\depth S/I$ and that the Rees ring of $I$ is Cohen--Macaulay. Thus the preceding corollary implies that $I$ has a constant depth function.

\medskip
\begin{proof}[Proof of Theorem~\ref{constsumprod}]
There are well--known facts that $\mathcal{R}(I+J)$ and $\mathcal{R}(IJ)$ are Cohen--Macaulay (see \cite[Theorem 4.7]{SVV}, respectively \cite[Corollary 2.10]{Hy}). We will prove the equivalent statements of the theorem by showing $(i)\Rightarrow (ii)\Rightarrow (iii)\Rightarrow (i)$.

(i)$\Rightarrow$(ii):  Since $I$ and $J$ are ideals in disjoint sets of variables, we may assume that there exist monomial ideals $I_0\subset S_1=K[x_1,\ldots,x_m]$ and $J_0\subset S_2=K[x_{m+1},\ldots,x_n]$ for some integer $m$ with $1\leq m< n$ such that $I=I_0S$ and $J=J_0S$. Then it follows  from \cite[Theorem 2.2.21]{Vi} that $\depth_S (S/(I+J))=\depth_{S_1} (S_1/I_0) + \depth_{S_2} (S_2/J_0)$. In addition, $\mathcal{R}(I_0)$ and $\mathcal{R}(J_0)$ are Cohen--Macaulay since $\mathcal{R}(I)$ and $\mathcal{R}(J)$ are Cohen--Macaulay. Therefore, by Corollary~\ref{constlim}, $\depth_{S_1} (S_1/I_0)=m-\ell(I_0)$ and $\depth_{S_2} (S_2/J_0)=n-m-\ell(J_0)$. Hence,
\[
\depth S/(I+J)=n-\ell(I)-\ell(J)=n-\ell(I+J),
\]
since $\ell(I)=\ell(I_0)$ and $\ell(J)=\ell(J_0)$ and since by  Proposition~\ref{addanalytic}, $\ell(I+J)=\ell(I)+\ell(J)$. Thus Corollary~\ref{constlim} implies that the depth function of $I+J$ is constant.

(ii)$\Rightarrow$(iii): In order to prove that the depth function of $IJ$ is constant we consider the following short exact sequence
\begin{equation}
\label{key}
0\To S/(I\cap J)\To S/I \dirsum S/J \To S/(I+J)\To 0.
\end{equation}
Since the depth function of $I+J$ is constant and $I,J$ are monomial ideals in disjoint sets of variables we have $\depth S/(I+J)=n-\ell(I)-\ell(J)$ and $I\cap J=IJ$.

We  first observe  that $$\lim_{t\to \infty}\depth S/(IJ)^t=n-\ell(IJ)=n+1-\ell(I)-\ell(J),$$
 since $\mathcal{R}(IJ)$ is Cohen-Macaulay.  The last equality follows from Proposition~\ref{addanalytic}. Considering  the exact sequence (\ref{key}) we find, by applying the Depth Lemma  (see for example \cite[Proposition 1.2.9]{BH}),  that $\depth S/IJ=\depth S/(I+J)+1$, and hence $\depth S/IJ=n+1-\ell(I)-\ell(J)$. Therefore, Corollary~\ref{constlim} we implies  that the depth function of $IJ$ is constant.

(iii)$\Rightarrow$(i): Observe that
\begin{equation}\label{one}
\depth_{S_1}(S_1/I_0)\geq \lim_{t\to \infty}\depth_{S_1}(S_1/{I_0^t})=m-\ell(I_0)=m-\ell(I)
\end{equation}
and
\begin{equation}\label{two}
\depth_{S_2}(S_2/J_0)\geq \lim_{t\to \infty}\depth_{S_2}(S_2/{J_0^t})=n-m-\ell(J_0)=n-m-\ell(J).
\end{equation}
For these inequalities we used the fact that for any squarefree monomial ideal $L$ one has $\depth L\geq \depth L^t$ for all $t$, as noted in the introduction.

Since $I$ and $J$ are monomial ideals in disjoint sets of variables we have
\begin{equation}\label{5}
\depth S/IJ=\depth S/(I+J)+1=\depth_{S_1}(S_1/I_0) + \depth_{S_2}(S_2/J_0)+1.
\end{equation}
It follows from (\ref{one}), (\ref{two}) and (\ref{5}) that
\[
\depth S/IJ\geq n+1-\ell(I)-\ell(J),
\]
where equality holds if and only if equality holds in (\ref{one}) and (\ref{two}). On the other hand, by Corollary~\ref{constlim}
\begin{equation}\label{zero}
\depth S/IJ=n+1-\ell(I)-\ell(J),
\end{equation}
since we assume that $IJ$ has a constant depth function. Therefore we have equality in (\ref{one}) and (\ref{two}) which by  Corollary~\ref{constlim} implies that the depth functions of $I_0$ and $J_0$ are constant. Consequently we also have that the depth functions of $I$ and $J$ are constant.
\end{proof}

\bigskip

\section{Classes of squarefree monomial ideals with constant depth functions}

\bigskip

 The purpose of this section is to prove that the edge ideals, matroidal ideals and facet ideals of pure simplicial forests connected in codimension one whose depth functions are constant belong to the class $\mathcal{C}$ defined in the introduction.

To begin with, let $G$ be a finite simple graph, $V(G)=[n]$ its set of vertices and $E(G)$ its set of edges. We denote as usual by $I(G)$ the {\em edge ideal} of the graph $G$, which is defined to be the ideal of $S=K[x_1,\ldots,x_n]$, generated by the monomials $x_ix_j$ such that $\{i,j\}\in E(G)$. Identifying each vertex $i$ with the variable $x_i$, we have $S=K[V(G)]$. Let $X$ be a subset of $V(G)$. Then the graph $G\setminus X$ is the graph on the vertex set $V(G)\setminus X$ with the set of edges $$E(G\setminus X)=\{\{x_i,x_j\}\in E(G): x_i,x_j\in V(G\setminus X) \}.$$

Let $X$ be the set of isolated vertices of $G$. Then
\[
I(G)=I(G\setminus X)K[V(G)].
\]
From this follows that the depth function of $I(G)$ is constant if and only if the depth function of $I(G\setminus X)$ is constant. Therefore, unless otherwise stated, we will always assume that $G$ has no isolated vertices.

\begin{Proposition}
\label{bipdepth1}
Let $G$ be a bipartite graph and $I(G)$ its edge ideal. Then $G$ is a complete bipartite graph if and only if $\depth S/{I(G)}=1$.
\end{Proposition}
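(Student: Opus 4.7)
The plan is to realize $S/I(G)$ as the Stanley--Reisner ring $K[\Delta(G)]$ of the independence complex $\Delta(G)$ (whose faces are the independent sets of $G$) and to translate the condition $\depth S/I(G)=1$ into connectivity of $\Delta(G)$. First, $\depth S/I(G)\ge 1$ is automatic: the associated primes of the squarefree monomial ideal $I(G)$ are its minimal primes, which are the vertex-cover primes of $G$, and since $I(G)\neq\mm$ the maximal ideal is not associated, so $H^0_\mm(S/I(G))=0$. A standard application of Hochster's formula for the local cohomology of Stanley--Reisner rings shows that $H^1_\mm(K[\Delta(G)])\ne 0$ if and only if $\tilde H_0(\Delta(G);K)\ne 0$: for $i=1$, the contribution $\tilde H^{i-|F|-1}(\lk_{\Delta(G)}F;K)$ of any face $F$ with $|F|\ge 1$ is a reduced cohomology in nonpositive degree of a link whose inspection shows it cannot survive unless the link is $\{\emptyset\}$, which itself means $F$ is a facet singleton and hence already forces $\Delta(G)$ to be disconnected (see below). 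Thus $\depth S/I(G)=1$ iff $\Delta(G)$ is disconnected.

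Next, the $1$-skeleton of $\Delta(G)$ coincides with the complement graph $\bar G$, because $\{u,v\}$ is an independent set of $G$ precisely when $\{u,v\}\notin E(G)$. Since the connectivity of a simplicial complex depends only on its $1$-skeleton (and a singleton facet $\{v\}$ of $\Delta(G)$ already splits off as its own connected component, reconciling the footnote above), $\Delta(G)$ is disconnected iff $\bar G$ is disconnected.

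Finally, the bipartite hypothesis closes the argument. Fix a bipartition $V(G)=V_1\sqcup V_2$. In $\bar G$ both $V_1$ and $V_2$ are cliques (since they are independent sets of $G$), so each is entirely contained in a single connected component of $\bar G$. Therefore $\bar G$ is disconnected iff no edge of $\bar G$ joins $V_1$ to $V_2$, i.e., iff every pair in $V_1\times V_2$ is a $G$-edge, i.e., iff $G=K_{|V_1|,|V_2|}$. Chaining the three equivalences yields the proposition. The main delicate point is the Hochster-formula step that pins $\depth S/I(G)=1$ down to $\tilde H_0(\Delta(G);K)\ne 0$; the rest is elementary graph-theoretic bookkeeping.
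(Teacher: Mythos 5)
Your argument is correct, but it takes a genuinely different route from the paper. The paper proves the nontrivial direction by induction on the number of vertices: it first observes that a disconnected graph has depth at least $2$ (additivity of depth over components), checks the base case ($K_{2,2}$ minus an edge) by hand, and in the inductive step uses the exact sequence $0\to S/(I(G):x_l)\to S/I(G)\to S/(I(G),x_l)\to 0$ together with the Depth Lemma; the ``if'' direction is quoted from the literature for the transversal ideal $(x_1,\dots,x_m)(y_1,\dots,y_n)$. You instead pass to the independence complex $\Delta(G)$ and use Hochster's formula to establish the general (not only bipartite) equivalence $\depth S/I(G)=1 \iff \Delta(G)$ disconnected $\iff \bar G$ disconnected, and only then invoke bipartiteness: the two parts are cliques of $\bar G$, so $\bar G$ is disconnected exactly when every cross pair is a $G$-edge. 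The delicate points are all handled: $\mm\notin\Ass(S/I(G))$ gives $H^0_\mm=0$; in degree $i=1$ the only Hochster contributions come from $F=\emptyset$ (giving $\tilde H_0(\Delta(G))$) and from singleton facets, and a singleton facet of $\Delta(G)$ is an isolated vertex of the $1$-skeleton, hence already forces $\tilde H_0\neq 0$ when $n\ge 2$. Your approach buys a clean, induction-free proof of both directions at once and isolates the graph-independent statement ``$\depth S/I(G)=1$ iff the complement graph is disconnected,'' which is of independent interest; the paper's proof is more elementary in that it avoids local cohomology entirely and stays within depth-lemma bookkeeping. Both are complete proofs of the proposition.
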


\begin{proof}
Let $V(G)=\{x_1,\ldots,x_m\}\cup\{y_1,\ldots,y_n\}$ be the bipartition of the vertex set of $G$. If $G$ is a complete bipartite graph then $I(G)=(x_1,\ldots,x_m)(y_1,\ldots,y_n)$, and  we obtain that $\depth S/I(G)=1$, see for example \cite[Theorem 3.14]{HRV}. For the converse, let us notice first that if $G$ is disconnected then $\depth S/I(G)\geq 2$. Indeed, if $G_1,\ldots,G_k$ with $k\geq 2$ are the connected components of $G$, then
\[
\depth S/I(G)=\depth K[V(G_1)]/I(G_1)+\cdots+\depth K[V(G_k)]/I(G_k)\geq k,
\]
where the equality follows from \cite[Theorem 2.2.21]{Vi} while the inequality follows from the fact that $\depth K[V(G_i)]/I(G_i)\geq 1$ for all $i$. Hence $\depth S/I(G)=1$ implies that $G$ is connected.

We prove by induction on $n+m$, the number of vertices of $G$, that a bipartite connected graph $G$ which is not complete has $\depth S/I(G)\geq 2$. The first such case of a  graph $G$ is when $n=m=2$ and $|E(G)|=3$. We may assume that $E(G)=\{\{x_1,y_1\},\{x_1,y_2\},\{x_2,y_2\}\}$ in which case one can easily check that $\depth S/I(G)=2$. For the induction step, let $m+n\geq 5$. Since $G$ is connected and  not complete we have $m,n\geq 2$. This  implies that at least one integer, say $m$, is greater than or equal to three. In addition, the fact that $G$ is not complete implies that there exist integers $i,j$ with $1\leq i\leq m$ and $1\leq j\leq n$ such that $\{x_i,y_j\}$ is not an edge of $G$. Let  $l\neq i$ be an integer with $1\leq l\leq m$,  and consider the following short exact sequence
\[
0\To  S/(I(G):(x_l)) \stackrel{x_l}{\To} S/I(G) \To S/(I(G),x_l) \To 0.
\]
We have the following ring isomorphisms
\[
S/(I(G):(x_l))\iso K[V(G')][x_l]/I(G') \quad \text{ and } \quad S/(I(G),x_l)\iso K[V(G'')]/I(G''),
\]
where $G'$ is the graph $G\setminus (N_{x_l}(G)\cup \{x_l\})$ and $G''$ is the graph $G\setminus\{x_l\}$. We recall that by $N_{x_l}(G)$ we denote, as usual, the set of neighbors of $x_l$ in the graph $G$, that is, the set of all vertices $x_p$ of $G$ such that $\{x_l,x_p\}\in E(G)$. Since $m\geq 3$, the graph $G'$ has at least two vertices and consequently $\depth K[V(G')]/I(G')\geq 1$. Therefore $\depth S/(I(G):(x_l))\geq 2$. The graph $G''$ is bipartite with $|V(G'')|=m+n-1$ and $m-1,n\geq 2$. Moreover $G''$ is not complete since $\{x_i,y_j\}\notin E(G'')$. If $G''$ is connected, then we apply the induction hypothesis and obtain that $\depth K[V(G'')]/I(G'')\geq 2$. Otherwise, $G''$ is disconnected and we have noticed that for such a graph $\depth K[V(G'')]/I(G'')\geq 2$. Therefore, we obtain that $\depth S/(I(G),x_l)\geq 2$. Applying now the Depth Lemma to the short exact sequence yields $\depth S/I(G)\geq 2$, as desired.
\end{proof}

\begin{Theorem}
\label{constantdepth}
Let $G$ be a graph without isolated vertices. Then the depth function of the edge ideal $I(G)$ of $G$ is constant if and only if the connected components of $G$ are complete bipartite graphs.
\end{Theorem}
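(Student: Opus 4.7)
The plan is to prove the two directions separately. For the ``if'' direction, assume each connected component $G_i$ of $G$ is a complete bipartite graph $K_{m_i,l_i}$ with bipartition $X_i\sqcup Y_i$; then $I(G_i)=P_iQ_i$ where $P_i=(x:x\in X_i)$ and $Q_i=(y:y\in Y_i)$ are monomial prime ideals in disjoint variable sets. Since the components of $G$ use pairwise disjoint sets of variables, $I(G)=\sum_i P_iQ_i$ belongs to the family $\mathcal{C}$ described in Example~\ref{good}(ii), and therefore has a constant depth function.

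For the ``only if'' direction, write $G=G_1\sqcup\cdots\sqcup G_r$ as a disjoint union of connected components, and set $S_i=K[V(G_i)]$ and $n_i=|V(G_i)|$. Each $G_i$ has at least one edge since $G$ has no isolated vertices. The idea is to sandwich $\depth S/I(G)$ between two quantities that must then collapse.

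For the upper bound, iterated application of Proposition~\ref{addanalytic} gives $\ell(I(G))=\sum_i \ell(I(G_i))$. For a connected graph $G_i$, the fiber cone of $I(G_i)$ is the edge subring $K[x_ux_v:\{u,v\}\in E(G_i)]$, whose Krull dimension is the classical invariant equal to $n_i$ when $G_i$ is non-bipartite and to $n_i-1$ when $G_i$ is bipartite. Denoting by $b$ the number of bipartite components of $G$, I obtain $\ell(I(G))=n-b$, and Burch's inequality combined with constancy of the depth function yields
\[
\depth S/I(G)=\lim_{t\to\infty}\depth S/I(G)^t \leq n-\ell(I(G))=b.
\]

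For the lower bound, iterating \cite[Theorem 2.2.21]{Vi} gives $\depth_S S/I(G)=\sum_{i=1}^r \depth_{S_i} S_i/I(G_i)$. For each $i$, the full vertex set $V(G_i)$ is never a minimal vertex cover of $G_i$ (removing any single vertex from a simple loopless graph still leaves a vertex cover), so the maximal ideal of $S_i$ is not associated to $I(G_i)$ and hence $\depth_{S_i} S_i/I(G_i)\geq 1$. Summing gives $\depth S/I(G)\geq r$. Comparing the two estimates yields $r\leq b\leq r$, so all components are bipartite and $\depth_{S_i}S_i/I(G_i)=1$ for every $i$; Proposition~\ref{bipdepth1} then forces every $G_i$ to be complete bipartite. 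The main obstacle is justifying the analytic spread formula $\ell(I(G_i))=n_i-\epsilon_i$, which requires recognizing the fiber cone of an equigenerated monomial ideal as the subring generated by its generators and invoking the classical Simis--Vasconcelos--Villarreal dimension formula for edge subrings.
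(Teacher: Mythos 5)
Your proof is correct and follows essentially the paper's own route: your ``only if'' direction is the same analytic-spread/Burch-inequality counting argument (the formula $\ell(I(G))=|V(G)|-b$ with $b$ the number of bipartite components, depth additivity over components, and Proposition~\ref{bipdepth1}) that the paper uses in its disconnected case, applied uniformly, while your ``if'' direction rests on the same Theorem~\ref{constsumprod} machinery, invoked through the class $\mathcal{C}$ of Example~\ref{good}(ii) instead of the transversal polymatroidal citation. The only deviation is cosmetic: your single counting argument subsumes the paper's separate connected-case analysis via normal torsion-freeness and the odd-cycle result.
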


\begin{proof}
First we prove the statement when $G$ is connected. Assume that $G$ is a complete bipartite graph with $V(G)=\{x_1,\ldots,x_m\}\cup\{y_1,\ldots,y_n\}$. Then it follows that $I(G)=(x_1,\ldots,x_m)(y_1,\ldots,y_n)$, hence $I(G)$ is a transversal polymatroidal ideal. Therefore, applying \cite[Corollary 4.14]{HRV} we obtain that $\depth S/I(G)^t=1$ for all $t\geq 1$, as desired. Conversely, assume that $\depth S/I(G)^t$ is constant for all $t$. If $G$ is not bipartite, then $G$ has an odd cycle and from \cite[Corollary 3.4]{CMS} we obtain that $\depth S/I(G)^t=0$ for $t\gg 0$. On the other hand, since $I(G)$ is a squarefree monomial ideal we have $\depth S/I(G)\geq 1$, a contradiction to our assumption that $\depth S/I(G)^t$ is constant for all $t$. Therefore, $G$ must be bipartite and by \cite[Theorem 5.9]{SVV} $I(G)$ is normally torsion free and consequently $\mathcal R(I)$ is a normal Cohen--Macaulay ring. Then it follows that $\lim_{t\to \infty}\depth S/I(G)^t=\dim S-\ell(I(G))$. Since $G$ is bipartite and connected  we obtain that $\lim_{t\to \infty}\depth S/I(G)^t=1$, see \cite[Corollary 10.3.18]{HH}. This implies, according to our hypothesis, that $\depth S/I(G)^t=1$ for all $t$. Hence $\depth S/I(G)=1$ which implies, via Proposition~\ref{bipdepth1}, that $G$ is complete, as desired.

Consider now the case that $G$ is disconnected having the connected components $G_1,\ldots,G_k$ with $k\geq 2$. Assume first that $\depth S/{I(G)^t}$ is constant for all $t\geq 1$. For a graph $G$ the analytic spread of its edge ideal $I(G)$ can be computed as
\[
\ell(I(G))=|V(G)|-c,
\]
where $c\leq k$ is the number of connected bipartite components, see for example \cite[Lemma 8.3.2]{Vi}. Then by the Burch inequality,
\[
\lim_{t\to \infty}\depth S/I(G)^t\leq |V(G)|-\ell(I(G))=c.
\]
On the other hand, since $G$ has $k$ connected components we have $$\depth S/I(G)=\depth K[V(G_1)]/I(G_1)+\cdots+\depth K[V(G_k)]/I(G_k)\geq k.$$ Therefore, $\depth S/{I(G)^t}$ is constant for all $t\geq 1$ implies that $$k=c \quad \text{ and } \quad \depth K[V(G_i)]/I(G_i)=1 \text{ for all } i=1,\ldots,k.$$
In conclusion, we see that $G_i$ is bipartite  and $\depth K[V(G_i)]/I(G_i)=1$ for all $i$, which by Proposition~\ref{bipdepth1} implies that $G_i$ is a complete bipartite graph for all $i$, as desired.

Conversely, let $G_1,\ldots,G_k$ be complete bipartite graphs. Then the depth function of $I(G_i)$ is constant for all $i$, and since $$I(G)=I(G_1)S+\cdots+I(G_k)S,$$
so that Theorem~\ref{constsumprod}(ii) yields the desired conclusion.
\end{proof}

Now we turn our attention to the case of matroidal ideals. We recall that a squarefree monomial ideal $I\subset S=K[x_1,\ldots,x_n]$ generated in a single degree is called {\em matroidal} if the following {\em exchange property} holds: for any $u,v\in G(I)$  and all $i$ such that $x_i|u$ and $x_i\nmid v$, there exists an integer $j\neq i$ such that $x_j|v$, $x_j\nmid u$ and $(u/x_i)x_j\in G(I)$.

For the formulation of the next statement we need to introduce some notation and concepts. For a monomial ideal $I$ such that $G(I)=\{u_1,\ldots,u_m\}$ we set by $\Supp(I)=\bigcup_{i=1}^m \Supp(u_i)$ and $\gcd(I)=\gcd(u_1,\ldots,u_m)$. The {\em linear relation graph} $\Gamma_I$ associated to a monomial ideal $I$ (see \cite[Definition 3.1]{HQ}) is the graph whose vertex set $V(\Gamma_I)$ is a subset of $\{x_1\ldots,x_n\}$ and for which $\{x_i,x_j\}\in E(\Gamma_I)$ if and only if there exist $u_k,u_l\in G(I)$ such that $x_iu_k=x_ju_l$. For our further considerations it is important to recall the fact that for a matroidal ideal $I$ one can compute the analytic spread as $\ell(I)=r-s+1$ (\cite[Lemma 4.2]{HQ}), where $r=|V(\Gamma_I)|$ and $s$ is the number of connected components of $\Gamma_I$.

\begin{Proposition}
\label{criterionmatroidconstdepth}
Let $I\subset S$ be a matroidal ideal generated in degree $d$, and denote as before by $s$ the number of connected components of $\Gamma_I$. Then  $s\leq d$. If in addition $\Supp(I)=\{x_1,\ldots,x_n\}$ and $\gcd(I)=1$, then $V(\Gamma_I)=\{x_1,\ldots,x_n\}$ and $s=d$ if and only if the depth function of $I$ is constant.
\end{Proposition}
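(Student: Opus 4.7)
The plan is to exploit the exchange property to obtain both the bound $s\leq d$ and the identification $V(\Gamma_I)=\{x_1,\ldots,x_n\}$, then use a matroid-theoretic factorization $I=I_1\cdots I_s$ to reduce the equivalence to an iterated application of Theorem~\ref{constsumprod}.

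First, fix a generator $u=x_{j_1}\cdots x_{j_d}\in G(I)$. Any $x_i\in V(\Gamma_I)$ belongs to some linear relation, hence divides some generator. If $x_i\notin\Supp(u)$, pick $v\in G(I)$ with $x_i\mid v$ and apply the exchange property to $v$ and $u$ with respect to $x_i$: this produces an index $k$ with $x_{j_k}\mid u$, $x_{j_k}\nmid v$ and $w:=(v/x_i)x_{j_k}\in G(I)$, so the relation $x_iw=x_{j_k}v$ exhibits the edge $\{x_i,x_{j_k}\}\in E(\Gamma_I)$. Thus every connected component of $\Gamma_I$ meets $\Supp(u)$, which gives $s\leq |\Supp(u)|=d$. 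Under the additional hypotheses $\Supp(I)=\{x_1,\ldots,x_n\}$ and $\gcd(I)=1$, the same trick establishes $V(\Gamma_I)=\{x_1,\ldots,x_n\}$: for each $x_i$, choose $u\in G(I)$ with $x_i\mid u$ and, using $\gcd(I)=1$, some $v\in G(I)$ with $x_i\nmid v$; the exchange property applied to $u$ and $v$ produces $j\neq i$ with $w:=(u/x_i)x_j\in G(I)$ and $x_iw=x_ju$, placing $x_i$ on an edge of $\Gamma_I$. Consequently $r=n$ and the formula $\ell(I)=r-s+1$ becomes $\ell(I)=n-s+1$.

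Matroidal ideals are normal, so by Hochster's theorem $\mathcal{R}(I)$ is Cohen--Macaulay, and Corollary~\ref{constlim} translates constancy of the depth function of $I$ into the equality $\depth S/I=n-\ell(I)=s-1$. To exploit this, I would invoke the matroid decomposition: the matroid $M$ with bases $\{\Supp(u)\: u\in G(I)\}$ splits as a direct sum $M=M_1\oplus\cdots\oplus M_t$ of connected matroids on disjoint ground sets $V_1,\ldots,V_t$ of cardinalities $n_k$ and ranks $d_k$, with $d_1+\cdots+d_t=d$. This induces a factorization $I=I_1\cdots I_t$, where each $I_k$ is the matroidal ideal of $M_k$ on the variables indexed by $V_k$, and the $I_k$ live in pairwise disjoint sets of variables. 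A standard basis-exchange argument identifies the connected components of $M$ with those of $\Gamma_I$, so $t=s$, each $\Gamma_{I_k}$ is connected, and the hypotheses $\Supp(I_k)=V_k$ and $\gcd(I_k)=1$ transfer from $I$ to each factor $I_k$.

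Iterating Theorem~\ref{constsumprod} (the Cohen--Macaulay property of $\mathcal{R}(I_k)$ lets the theorem cascade through products), the depth function of $I=I_1\cdots I_s$ is constant if and only if every $I_k$ has constant depth function. Applying Step 2 to each $I_k$ gives $V(\Gamma_{I_k})=V_k$ and $\ell(I_k)=n_k$, so by Corollary~\ref{constlim} the depth function of $I_k$ is constant if and only if $\depth K[V_k]/I_k=n_k-\ell(I_k)=0$. For a proper squarefree monomial ideal, depth zero forces $I_k$ to equal the graded maximal ideal of $K[V_k]$, hence $d_k=1$. Therefore the depth function of $I$ is constant if and only if $d_k=1$ for all $k$, equivalently $d=d_1+\cdots+d_s=s$. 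The main obstacle is a clean justification of the product decomposition $I=I_1\cdots I_s$ and the identification of the matroid components with the connected components of $\Gamma_I$; the rest is a formal combination of the exchange property, Corollary~\ref{constlim}, and Theorem~\ref{constsumprod}.
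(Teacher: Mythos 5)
Your proof is correct, but it follows a genuinely different route from the paper's in both halves. For $s\leq d$ the paper does not argue combinatorially at all: it quotes the formula $\depth S/I=d-1$ for matroidal ideals (Chiang--Hsieh, \cite[Corollary 2.6]{C}), combines it with $\depth S/I\geq\lim_t\depth S/I^t=n-\ell(I)$ and with $\ell(I)=r-s+1$, and reads off $d-s\geq n-r\geq 0$. Your direct exchange-property argument (every connected component of $\Gamma_I$ meets $\Supp(u)$ for a fixed generator $u$) is a valid and self-contained replacement. For the equivalence, the paper again leans on $\depth S/I=d-1$: once $r=n$ is established exactly as you do, Corollary~\ref{constlim} gives ``constant iff $d-1=n-\ell(I)=s-1$'' in one line. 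You avoid citing that depth formula by decomposing the underlying matroid into its connected components, factoring $I=I_1\cdots I_t$, and cascading Theorem~\ref{constsumprod}; this works, and as a by-product it essentially re-derives Theorem~\ref{constmatroid}, but it is much heavier machinery for this proposition. The one point you rightly flag as needing justification -- that the matroid components coincide with the components of $\Gamma_I$, so that $t=s$ and each $\Gamma_{I_k}$ is connected -- is a standard consequence of the transitivity of the circuit-connectivity relation on a matroid (elements are $\Gamma_I$-adjacent iff they lie on a common circuit), but it is not free and is nowhere needed in the paper's argument. If you want the short proof, import $\depth S/I=d-1$; if you want to keep your route, cite the matroid connectivity theorem explicitly and note that loops and coloops are excluded by $\Supp(I)=\{x_1,\ldots,x_n\}$ and $\gcd(I)=1$.
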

\begin{proof}
It is well known that if $I$ is matroidal then $\mathcal{R}(I)$ is normal and hence Cohen--Macaulay (see \cite[Proposition 3.11]{Vi1}). Therefore we have $\lim_{t\to \infty}\depth S/I^t=n-\ell(I)$. Since $\depth S/I\geq \lim_{t\to \infty}\depth S/I^t$ and $\depth S/I=d-1$ (see \cite[Corollary 2.6]{C}), we obtain the inequality $\ell(I)\geq n-d+1$. On the other hand, as observed above, we have $\ell(I)=r-s+1$, where $r=|V(\Gamma_I)|$. This then implies that $r-s+1\geq n-d+1$, or equivalently $d-s\geq n-r$. The conclusion $s\leq d$ follows now since we always have $r\leq n$.

Our additional assumptions $\Supp(I)=\{x_1,\ldots,x_n\}$ and $\gcd(I)=1$ imply that $r=n$. Indeed, if $\Supp(I)=\{x_1,\ldots,x_n\}$ and $\gcd(I)=1$ then for every $i\in [n]$ there exist  $u,v\in G(I)$ such that $x_i$ divides $u$ and does not divide $v$. It follows then from the definition of a matroidal ideal that there exists $j\in [n]$ with $j\neq i$ such that $x_j$ divides $v$ and does not divide $u$ and with the property that $(u/x_i)x_j\in G(I)$. This implies, according to the definition of the linear relation graph, that $\{x_i,x_j\}\in E(\Gamma_I)$. Therefore we have $V(\Gamma_I)=\{x_1,\ldots,x_n\}$ and $r=n$.

Finally, for proving the equivalence stated in the proposition let us notice  that, since $\mathcal{R}(I)$ is Cohen--Macaulay, Corollary~\ref{constlim} together with the first part of our proposition imply that the depth function of $I$ is constant if and only if $n-d=r-s$, that is, if and only if $d=s$, as desired.
\end{proof}

\begin{Lemma}
\label{linrelgraph}
Let $I\subset S$ be a matroidal ideal generated in degree $d$ such that $\Supp(I)=\{x_1,\ldots,x_n\}$ and $\gcd(I)=1$. Then
\begin{equation}\label{conn}
I\subset P_1\cap\cdots\cap P_s,
\end{equation}
where $P_1,\ldots,P_s$ are the monomial prime ideals generated by the sets of vertices of the connected components $\Gamma_1,\ldots,\Gamma_s$ of $\Gamma_I$.
\end{Lemma}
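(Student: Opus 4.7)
The plan is to prove the inclusion $I\subset P_1\cap\cdots\cap P_s$ by showing that the support of each minimal generator of $I$ meets the vertex set of every connected component of $\Gamma_I$. Since $P_i$ is the prime ideal generated by $V(\Gamma_i)$, the condition $u\in P_i$ is equivalent to $\Supp(u)\cap V(\Gamma_i)\neq\emptyset$, so it suffices to fix $i\in\{1,\ldots,s\}$ and show that no $u\in G(I)$ can satisfy $\Supp(u)\cap V(\Gamma_i)=\emptyset$. I would argue this by contradiction.

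Assume such a $u$ exists. The hypotheses $\Supp(I)=\{x_1,\ldots,x_n\}$ and $\gcd(I)=1$, together with the observation in the proof of Proposition~\ref{criterionmatroidconstdepth} that under these hypotheses $V(\Gamma_I)=\{x_1,\ldots,x_n\}$, guarantee that every vertex in $V(\Gamma_i)$ appears in the support of some minimal generator. I would therefore choose a vertex $x_k\in V(\Gamma_i)$ and a generator $v\in G(I)$ with $x_k\mid v$. Since $x_k\in V(\Gamma_i)$ and $\Supp(u)\cap V(\Gamma_i)=\emptyset$, we have $x_k\nmid u$, which puts the pair $(v,u)$ exactly in the situation where the matroidal exchange property applies at the index $k$: there exists $j\neq k$ with $x_j\mid u$, $x_j\nmid v$, and $w:=(v/x_k)x_j\in G(I)$.

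The last step is to extract an edge of $\Gamma_I$ from this exchange. The identity $x_k\, w=x_j\, v$ is precisely the defining relation for $\{x_k,x_j\}$ to be an edge of $\Gamma_I$, so $x_j$ lies in the same connected component as $x_k$, namely $x_j\in V(\Gamma_i)$. Combined with $x_j\mid u$, this places $x_j$ in $\Supp(u)\cap V(\Gamma_i)$, contradicting the standing assumption. Hence every $u\in G(I)$ belongs to $P_i$, and since $i$ was arbitrary, $I\subset P_1\cap\cdots\cap P_s$.

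I do not expect a substantive obstacle; the only point that requires care is to invoke the exchange property in the direction $(v,u)$ rather than $(u,v)$, so that the edge produced in $\Gamma_I$ has one endpoint in $V(\Gamma_i)$ (forcing the other endpoint into $V(\Gamma_i)$ by connectivity) and the other endpoint simultaneously dividing $u$. Everything else is a formal unwinding of the definitions of $\Supp(I)$, $\gcd(I)$, and $\Gamma_I$.
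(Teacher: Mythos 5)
Your proof is correct and follows essentially the same route as the paper: assume some $u\in G(I)$ misses a component $V(\Gamma_i)$, use $\Supp(I)=\{x_1,\ldots,x_n\}$ to find $v\in G(I)$ divisible by some $x_k\in V(\Gamma_i)$, apply the exchange property to $(v,u)$ at $x_k$, and read off the edge $\{x_k,x_j\}$ from $x_kw=x_jv$ to contradict the component decomposition. Your care about the direction of the exchange matches the paper's argument exactly, so there is nothing to add.
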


\begin{proof}
It follows from Proposition~\ref{criterionmatroidconstdepth} that $\Gamma_I$ has $s\leq d$ connected components $\Gamma_1,\ldots,\Gamma_s$ and $V(\Gamma_I)=\{x_1,\ldots,x_n\}$. In order to prove (\ref{conn}) we may restrict ourselves  to the case $s\geq 2$. Indeed, if $s=1$ then $P_1=\mm$ and we obviously have $I\subset \mm$. Hence let $s\geq 2$ and assume that (\ref{conn}) does not hold. Then there exists $i$ with $1\leq i\leq s$ such that $I\not\subset P_i$. For the simplicity of notation we assume that $i=1$ and $P_1=(x_1,\ldots,x_k)$, where $k<n$ since $s\geq 2$. Let us observe that $I\not\subset P_1$ implies the existence of a monomial $u\in G(I)$ such that $\Supp(u)\cap \{x_1,\ldots,x_k\}=\emptyset$. On the other hand $\Supp(I)=\{x_1,\ldots,x_n\}$ implies that there exists a monomial $v\in G(I)$ such that $x_1|v$. Therefore, by applying the exchange property between $v$ and $u$ there exists an integer $j>k$ such that $x_j|u$, $x_j\nmid v$ and $w=(v/x_1)x_j\in G(I)$. Hence we obtain from $x_1w=x_jv$ that $\{x_1,x_j\}\in E(\Gamma_I)$, a contradiction since $x_j\notin V(\Gamma_1)$. Consequently we have $I\subset P_1\cap\cdots\cap P_s$.
\end{proof}

\begin{Theorem} \label{constmatroid}
Let $I\subset S$ be a matroidal ideal generated in degree $d$ such that $\gcd(I)=1$ and $\Supp(I)=\{x_1,\ldots,x_n\}$. Then the depth function of $I$ is constant if and only if $I=P_1\cdots P_d$, where $P_1,\ldots,P_d$ are monomial prime ideals in pairwise disjoint sets of variables.
\end{Theorem}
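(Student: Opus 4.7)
The backward direction is immediate from Corollary~\ref{hope}: if $I=P_1\cdots P_d$ with the $P_l$ monomial primes in pairwise disjoint sets of variables, the single-set collection $\{\{1,\ldots,d\}\}$ belongs to $\mathcal{A}$, so $I\in\mathcal{C}$ and hence has constant depth function. (Equivalently, iterate Theorem~\ref{constsumprod}(iii).)

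For the forward direction, assume that the depth function of $I$ is constant. Proposition~\ref{criterionmatroidconstdepth} yields $V(\Gamma_I)=\{x_1,\ldots,x_n\}$ together with $s=d$, where $\Gamma_1,\ldots,\Gamma_d$ are the connected components of $\Gamma_I$. Let $P_l$ denote the monomial prime generated by $V(\Gamma_l)$; these lie in pairwise disjoint sets of variables, so $P_1\cap\cdots\cap P_d=P_1\cdots P_d$, and Lemma~\ref{linrelgraph} gives $I\subseteq P_1\cdots P_d$. Since both ideals are generated in degree $d$ and are squarefree, a degree count forces every $u\in G(I)$ to be a transversal monomial $x_{i_1}\cdots x_{i_d}$ with $x_{i_l}\in P_l$ for each $l$.

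The remaining task, and the main obstacle, is the reverse inclusion $P_1\cdots P_d\subseteq I$. The exchange property as stated in the paper only guarantees the existence of \emph{some} variable to swap in, so one cannot read off a prescribed transversal directly. My plan is to pass to matroid language: $G(I)$ is the set of bases of a matroid $M$ of rank $d$ on $\{x_1,\ldots,x_n\}$, the exchange property being exactly the basis exchange axiom. The pivotal observation to verify is that $\{x_i,x_j\}\in E(\Gamma_I)$ if and only if $x_i,x_j$ lie in a common circuit of $M$. For $(\Rightarrow)$, if bases $B_1,B_2$ satisfy $B_1\triangle B_2=\{x_i,x_j\}$, then $B_1\cup B_2$ has rank $d$ and size $d+1$, hence contains a unique circuit, which must contain both $x_i$ and $x_j$. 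For $(\Leftarrow)$, given a circuit $C\ni x_i,x_j$, extend $C\setminus\{x_i\}$ to a basis $B$; then $B\cup\{x_i\}$ contains $C$ as its unique circuit, so $(B\cup\{x_i\})\setminus\{x_j\}$ is a basis and the required pair is exhibited. Consequently the connected components of $\Gamma_I$ coincide with the matroid-theoretic connected components of $M$.

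It follows that $M=M_1\oplus\cdots\oplus M_d$, where $M_l$ is a connected matroid supported on $V(\Gamma_l)$. Since $\Supp(I)=\{x_1,\ldots,x_n\}$, no element is a loop, so each $M_l$ has rank at least one; the identity $\sum_l\rank M_l=\rank M=d$ then forces $\rank M_l=1$ for every $l$. A connected rank-$1$ loopless matroid has every element of its ground set as a basis. Therefore the bases of $M$ are precisely the transversals $\{x_{i_1},\ldots,x_{i_d}\}$ with $x_{i_l}\in V(\Gamma_l)$, that is, $G(I)=G(P_1\cdots P_d)$, and hence $I=P_1\cdots P_d$.
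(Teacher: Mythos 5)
Your proof is correct, but the forward direction follows a genuinely different route from the paper. After the common reductions (Proposition~\ref{criterionmatroidconstdepth} giving $s=d$ and $V(\Gamma_I)=\{x_1,\ldots,x_n\}$, Lemma~\ref{linrelgraph} giving $I\subseteq P_1\cdots P_d$), the paper stays entirely inside the exchange-property framework: it writes $I=x_{11}J_1+\cdots+x_{1k_1}J_{k_1}$ with $J_l=I:(x_{1l})$, verifies by explicit exchange arguments that each $J_l$ is matroidal of degree $d-1$ with full support and $\gcd(J_l)=1$ inside $P_2\cdots P_d$, and concludes $J_l=P_2\cdots P_d$ by induction on $d$. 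You instead pass to matroid language: the identity $x_iu_k=x_ju_l$ (for squarefree generators of equal degree $d$, which forces $x_j\mid u_k$, $x_i\mid u_l$ and $u_k/x_j=u_l/x_i$ --- worth spelling out) identifies edges of $\Gamma_I$ with pairs of bases of symmetric difference two, hence with pairs of elements lying on a common circuit; then the standard facts that the common-circuit relation is an equivalence relation, that $M$ is the direct sum of its restrictions to its components, and that rank is additive force each of the $d$ components to be loopless of rank one, so the bases are exactly the transversals and $I=P_1\cdots P_d$. Your argument is shorter and more conceptual (it also explains structurally what $s=d$ means), at the cost of importing standard matroid-connectivity theory that the paper avoids; the paper's induction is more hands-on but self-contained. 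For the backward direction you invoke Corollary~\ref{hope} (or iterate Theorem~\ref{constsumprod}(iii)), while the paper quotes results of \cite{HRV} on transversal polymatroidal ideals; both are valid, and yours has the mild advantage of using only the paper's own Section~1 machinery.
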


\begin{proof}
Assume first that $I=P_1\cdots P_d$, where $P_1,\ldots,P_d$ are monomial prime ideals in pairwise disjoint sets of variables. It follows from \cite[Theorem 4.12]{HRV} and \cite[Corollary 4.14]{HRV} that $\depth S/I=\lim_{t\to \infty}\depth S/I^t=d-1$. Therefore, due to the fact that the Rees ring of a matroidal ideal is Cohen--Macaulay,  the depth function of $I$ is constant.

For the converse, let us notice first that since the depth function of $I$ is constant,  Proposition~\ref{criterionmatroidconstdepth} implies that $\Gamma_I$ has $d$ connected components $\Gamma_1,\ldots,\Gamma_d$. Moreover, if for all $i$ we denote by $P_i$ the monomial prime ideal  generated by the set of vertices of $\Gamma_i$, then by Lemma~\ref{linrelgraph}
\[
I\subset P_1\cap\cdots\cap P_d=P_1\cdots P_d.
\]

We will prove that $I=P_1\cdots P_d$. For this we first relabel the set of variables $\{x_1,\ldots,x_n\}$ suitably to indicate to which $P_i$ they belong,  that is, we write
\[
P_i=(x_{i1},\ldots,x_{ik_i}) \quad \text{for}\quad i=1,\ldots d.
\]
Then $\{x_1,\ldots,x_n\}=\Union_{i=1}^d \{x_{i1},\ldots,x_{ik_i}\}$, where $k_1+\cdots+k_d=n$ and $k_1,\ldots,k_d\geq 2$ since $\Supp(I)=\{x_1,\ldots,x_n\}$ and $\gcd(I)=1$. We also write $I=x_{11}J_1+\cdots+x_{1k_1}J_{k_1}$, where $J_l=I:(x_{1l})\subset P_2\cdots P_d$  for all $l$, since $I\subset P_1\cdots P_d$. Moreover, we have for all $l$ that $J_l$ is a matroidal ideal generated in degree $d-1$, see for example \cite[Theorem 1.1]{BH1}. In this setting, to prove that $I=P_1\cdots P_d$ is equivalent to showing that $J_1=\cdots=J_{k_1}=P_2\cdots P_d$. We prove this latter statement by induction on $d$.

Assume first that $d=2$. We show that $J_l=P_2$ for all $l$ with $1\leq l\leq k_1$. Since $\Supp(I)=\{x_{11},\ldots,x_{1k_1}\}\cup\{x_{21},\ldots,x_{2k_2}\}$, there exists a monomial $u\in G(I)$ such that $x_{1l}|u$. Without loss of generality, we may assume that $u=x_{1l}x_{21}$. Let $j$ be an arbitrary integer such that $2\leq j\leq k_2$ and $v=x_{1i_j}x_{2j}\in G(I)$ for some $i_j\leq k_1$. If $i_j=l$ then $x_{2j}\in J_l$. Otherwise, applying the exchange property between $v$ and $u$ we obtain $(v/x_{1i_j})x_{1l}\in G(I)$ hence $x_{2j}\in J_l$. Consequently, since $j$ was chosen arbitrarily we have $J_l=(x_{21},\ldots,x_{2k_2})$.

For the induction step assume that $I$ is generated in degree $d$. Since $I\subset P_1\cdots P_d$ it follows that all monomials of $I$ are of the form $x_{1i_1}\cdots x_{di_d}$ for some integers $i_1,\ldots,i_d$ such that $i_t\leq k_t$ for all $t$. Moreover we know that  $J_1,\ldots,J_{k_1}\subset P_2\cdots P_d$ are matroidal ideals generated in degree $d-1$. The desired conclusion follows at once if we show that $\gcd(J_l)=1$ and $\Supp(J_l)=\bigcup_{j=2}^d \{x_{j1},\ldots,x_{jk_j}\}$ for all $l$. Indeed, if this is the case then our induction hypothesis implies  that $J_l=P_2\cdots P_d$ for all $l$,  and consequently $I=P_1\cdots P_d$.

It is enough to prove that $\gcd(J_l)=1$ and $\Supp(J_l)=\bigcup_{j=2}^d \{x_{j1},\ldots,x_{jk_j}\}$ only for $l=1$, the other cases being analogous to this one. Since $x_{11}\in\Supp(I)$ we may assume that the monomial $u'=x_{11}x_{21}\cdots x_{d1}$ belongs to $G(I)$, or equivalently $x_{21}\cdots x_{d1}\in J_1$. First we show that $\Supp(J_1)=\bigcup_{j=2}^d \{x_{j1},\ldots,x_{jk_j}\}$. We choose a variable $x_{il}$ for some integers $i,l$ with $2\leq i\leq d$ and $1\leq l\leq k_i$. Since $x_{il}\in\Supp(I)$ it follows that there exist integers $j_1,\ldots,j_{i-1},j_{i+1},\ldots,j_d$ such that the monomial
\[
v'=x_{1j_1}\cdots x_{i-1 j_{i-1}}x_{il}x_{i+1 j_{i+1}}\cdots x_{dj_d}\in G(I).
\]

If $j_1=1$, then by the definition of $J_1$ we have $x_{il}\in\Supp(J_1)$. Otherwise applying the exchange property for $u'$ and $v'$ we obtain that the monomial $(v'/x_{1j_1})x_{11}\in G(I)$. Therefore,   $x_{il}\in\Supp(J_1)$, and we are done. Secondly, we prove that $\gcd(J_1)=1$. Indeed, assume by contradiction that this is not the case. Then there exists a variable, say $x_{21}$, which divides all $w\in G(J_1)$. Let
\[
w_1=x_{21}x_{3j_3}\cdots x_{dj_d}\in G(J_1)
\]
be such a monomial. Since $x_{22}\in\Supp(I)$,  there exists a monomial $w_2\in G(I)$ of the form $w_2=x_{1i_1}x_{22}x_{3i_3}\cdots x_{di_d}$. Our assumption that $x_{21}|\gcd(J_1)$ implies that $i_1\neq 1$. Applying now the exchange property between the monomials $w_2,x_{11}w_1\in G(I)$ with respect to the variable $x_{1i_1}$, we obtain $(w_2/x_{1i_1})x_{11}\in G(I)$. Hence $x_{22}x_{3i_3}\cdots x_{di_d}\in J_1$, a contradiction to our assumption. Therefore $\gcd(J_1)=1$, and we are done.
\end{proof}

A consequence of the previous theorem is the following nice fact. Let $I$ be a matroidal ideal generated in degree $d$ such that $\gcd(I)=1$, $\Supp(I)=[n]$ and $G(I)\subset G(P_1\cdots P_d)$, where $P_1,\ldots,P_d$ are monomial prime ideals generated in pairwise disjoint sets of variables with the property that $P_1+\cdots+P_d=\mm$. Then $I=P_1\cdots P_d$, and in particular $I$ is a transversal matroidal ideal.

\medskip

Finally we consider the facet ideal of a forest. Let $\Delta$ be a simplicial complex of dimension $d$ on the set $[n]$. We denote by $\mathcal{F}(\Delta)$ the set of facets of $\Delta$ and by $I(\Delta)\subset S=K[x_1,\ldots,x_n]$ the {\em facet ideal} of $\Delta$, whose generators are the monomials $x_F=\prod_{i\in F}x_i$ for all $F\in\mathcal{F}(\Delta)$. The simplicial complex $\Delta$ is called {\em pure} if all the facets have the same dimension. A pure simplicial complex of dimension $d$ is said to be {\em connected in codimension one} if for any two facets $F,G\in\mathcal{F}(\Delta)$ there exist facets $F=F_1,F_2,\ldots,F_r=G$ such that $\dim (F_i\cap F_{i+1})=d-1$ for all $i$.

A facet $F\in\mathcal{F}(\Delta)$ is called a {\em leaf} if either $\mathcal{F}(\Delta)=\{F\}$ or there exists a facet $G\neq F$ such that $F\cap H\subseteq F\cap G$ for all $H\in\mathcal{F}(\Delta)$. The facet $G$ with this property is called a {\em branch} of $F$. A vertex $i$ of $\Delta$ is called a {\em free vertex} of $\Delta$ if $i$ belongs to exactly one facet. Observe that every leaf has at least one free vertex. Faridi \cite{Fa} calls a simplicial complex $\Delta$ a {\em simplicial forest} if each simplicial complex $\Gamma$ with $\mathcal{F}(\Gamma)\subset\mathcal{F}(\Delta)$ has a leaf. A connected simplicial forest is called a {\em simplicial tree}.
\texttt{}

\begin{Theorem}
\label{consttree}
Let $\Delta$ be a pure simplicial forest with the property that each connected component $\Delta_1,\ldots,\Delta_k$ is connected in codimension one. Then the following conditions are equivalent:
\begin{enumerate}
\item[(a)] The depth function of $I(\Delta)$ is constant;
\item[(b)] For all $i$, $I(\Delta_i)$ is a product of monomial prime ideals such that at most one of the factors is not principal;
\item[(c)] $I(\Delta)$ belongs to the class $\mathcal{C}$ defined in the introduction.
\end{enumerate}
\end{Theorem}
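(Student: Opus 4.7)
The plan is to reduce to a connected $\Delta$, obtain (b)$\Leftrightarrow$(c) and (c)$\Rightarrow$(a) by inspection, and then prove (a)$\Rightarrow$(b) by induction on the number of facets while extracting a common vertex at each step. Since $I(\Delta)=\sum_i I(\Delta_i)$ has summands in pairwise disjoint sets of variables, and since the Rees algebras $\mathcal{R}(I(\Delta_i))$ are Cohen--Macaulay (facet ideals of simplicial forests are normal, cf.\ Faridi), Theorem~\ref{constsumprod} implies that constancy of the depth function for $I(\Delta)$ is equivalent to constancy for each $I(\Delta_i)$. Condition (b) is stated component-wise and (c) decomposes the same way, because $\mathcal{C}$ is closed under disjoint-variable sums while the connectedness of each $\Delta_i$ prevents any further disjoint-variable splitting of $I(\Delta_i)$. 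Hence it suffices to prove the equivalence under the assumption that $\Delta$ is a connected pure simplicial tree, connected in codimension one.

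In this reduced setting, (b)$\Leftrightarrow$(c) follows from the definition of $\mathcal{C}$: (b) asserts that $I(\Delta)=P_1\cdots P_m$ is a product of monomial primes in pairwise disjoint variables, which is a basic element of $\mathcal{C}$ (take $r=1$ in Example~\ref{good}(ii)); conversely, if $I(\Delta)\in\mathcal{C}$ the connectedness of $\Delta$ forces the defining decomposition to reduce to a single basic product, and the simplicial-tree hypothesis rules out two non-principal factors---two such factors would produce four generators whose corresponding facets form a $4$-cycle, i.e.\ a sub-collection with no leaf. The implication (c)$\Rightarrow$(a) is immediate from Corollary~\ref{hope}.

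The remaining direction (a)$\Rightarrow$(b) is proved by induction on $r=|\mathcal{F}(\Delta)|$. The case $r=1$ is trivial. For $r\geq 2$ the key claim is that if $I(\Delta)$ has constant depth function and every facet has size $\geq 2$, then $\bigcap_{F\in\mathcal{F}(\Delta)} F\neq\emptyset$. Granting this, pick $v$ in the common intersection, let $\Delta'$ be the pure simplicial tree (connected in codim $1$) with facets $F\setminus\{v\}$, and observe that $I(\Delta)=x_v\cdot I(\Delta')$. The short exact sequence
\[
0\to S/I(\Delta')^k \stackrel{x_v^k}{\to} S/(x_v^k I(\Delta')^k) \to S/(x_v^k) \to 0,
\]
combined with the Depth Lemma, yields $\depth S/I(\Delta)^k=\depth_{S'}(S'/I(\Delta')^k)+1$ for every $k\geq 1$, where $S'=K[x_i:i\neq v]$. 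Hence $I(\Delta')$ also has constant depth function, and induction gives $I(\Delta')=P_2\cdots P_m$ with at most one non-principal factor, whence $I(\Delta)=(x_v)\cdot P_2\cdots P_m$ satisfies (b). The remaining case, in which every facet is a singleton, makes $I(\Delta)$ a monomial prime and (b) holds trivially.

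The main obstacle is the key claim. Since $\mathcal{R}(I(\Delta))$ is Cohen--Macaulay, Corollary~\ref{constlim} turns constancy of depth into the equation $\depth S/I(\Delta)=n-\ell(I(\Delta))$, so it suffices to establish the strict inequality $\depth S/I(\Delta)>n-\ell(I(\Delta))$ whenever $\bigcap_F F=\emptyset$ and each facet has size $\geq 2$. In the edge-ideal sub-case (facet dimension one) this is Theorem~\ref{constantdepth}: a tree graph without a common vertex is not a star, so its edge ideal has non-constant depth function. For higher facet dimensions one induces on the facet dimension via a leaf-branch decomposition of the simplicial tree, removing a leaf together with its free vertices and comparing $\depth$ and $\ell$ of $I(\Delta)$ with those of the smaller tree; the delicate point is ensuring that the strict inequality $\depth>n-\ell$ survives this removal so that the induction eventually lands in the edge-ideal base case.
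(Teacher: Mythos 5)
Your reduction to the connected case via Theorem~\ref{constsumprod}, the use of Corollary~\ref{constlim} together with the identities $\ell(I(\Delta))=m$ and $m=n-d$ to turn constancy into the equation $\depth S/I(\Delta)=d$, and the observation that (b) and (c) are essentially trivially equivalent here all match the paper. The iterative extraction of a common vertex (writing $I(\Delta)=x_v I(\Delta')$ and transferring constancy of the depth function through the exact sequence) is a legitimate alternative framing, although the induction should run on $\dim\Delta$ or on $n$ rather than on $|\mathcal{F}(\Delta)|$, since extracting a common vertex leaves the number of facets unchanged.

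The genuine gap is the key claim itself: you never prove that constancy of the depth function forces $\bigcap_F F\neq\emptyset$, which is the entire technical content of (a)$\Rightarrow$(b). You correctly reduce it to showing $\depth S/I(\Delta)>d$ when there is no common vertex, and the edge-ideal base case is indeed covered by Theorem~\ref{constantdepth}, but the proposed induction on facet dimension ``by removing a leaf together with its free vertices'' cannot work as described: removing a leaf decreases the number of facets, not the dimension, so the induction never descends to the one-dimensional base case, and you yourself flag the survival of the strict inequality under this removal as an unresolved ``delicate point.'' The paper closes exactly this hole by a direct construction: it first shows combinatorially that if $I(\Delta)\neq P\cdot u$ then $\Delta$ has two leaves $F_1,F_m$ with $\dim F_1\cap F_m<d-1$ (using that if every branch of a leaf is itself a leaf, the whole complex is a cone over a prime, and otherwise pruning the leaf-branches leaves a tree with a second, far-away leaf), and then exhibits an explicit regular sequence of $d+1$ linear forms on $S/I(\Delta)$ by identifying the non-free vertices of $F_1$ with its free vertex and identifying two vertices of $F_m$ outside $F_1$ --- a depolarization argument resting on \cite[Proposition 1.6.2]{HH}. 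Some argument of this kind (or another proof of the strict inequality $\depth S/I(\Delta)>d$) is indispensable; without it your proof of (a)$\Rightarrow$(b) is incomplete.
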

\begin{proof}
The implication (b)$\Rightarrow$(c) is obvious and (c)$\Rightarrow$(a) follows immediately from Example~\ref{good}(ii). It remains to prove the implication (a)$\Rightarrow$(b).  We recall that for a simplicial forest the Rees algebra of its facet ideal is Cohen--Macaulay, see for example \cite[Proposition 10.3.21]{HH}. Now since $I(\Delta)=I(\Delta_1)S+\cdots+I(\Delta_k)S$ and the Rees algebras $\mathcal{R}(I(\Delta))$, $\mathcal{R}(I(\Delta_i))$ are Cohen--Macaulay for all $i$ we obtain by Theorem~\ref{constsumprod} that $I(\Delta)$ has constant depth if and only if each $I(\Delta_i)$ has constant depth. Therefore, we may assume that $\Delta$ is connected. Let $\mathcal{F}(\Delta)=\{F_1,\ldots,F_m\}$. Since $\Delta$ is a pure tree, $\ell(I(\Delta))=m$ and $\lim_{t\to \infty}\depth S/I(\Delta)^t=n-m$, see \cite[Corollary 10.3.22]{HH}. On the other hand, $\Delta$ being connected in codimension one implies that $m=n-d$, where $d=\dim \Delta$. Therefore,  $\lim_{t\to \infty}\depth S/I(\Delta)^t=d$.

Note, that if $m=1$,  then $I(\Delta)$ is a principal ideal, hence has a constant depth function, and $I(\Delta)$ is the product of principal monomial prime ideals. Therefore we can restrict ourselves to the case $m\geq 2$. Assume now that $I(\Delta)$ is not of the form described in (b), that is $I(\Delta)\neq P\cdot u$, where $P$ is a monomial prime ideal and $u$ is a squarefree monomial with $u\not\in P$. We will prove then that $\depth S/I(\Delta)>d$, which combined with $\lim_{t\to \infty}\depth S/I(\Delta)^t=d$ yields a contradiction to our hypothesis that the depth function of $I(\Delta)$ is constant.

First we show that there exist two leaves, say $F_1$ and $F_m$, such that $\dim F_1\cap F_m < d-1$. We may assume that $F_1=\{x_{i_1},\ldots,x_{i_{d+1}}\}$ is a leaf of $\Delta$. (Here and in the following we identify a vertex $i$ with the variable $x_i$.) Let $G_1,\ldots,G_r$ be the branches of $F_1$, say, $G_k\sect F_1=\{x_{i_1},\ldots,x_{i_d}\}$ for all $k$, and $G_k\setminus F_1=\{x_{j_k}\}$ for  $k=1,\ldots,r$.
In the case that  all $G_k$ are leaves we observe that $\mathcal{F}(\Delta)=\{F_1,G_1,\ldots,G_r\}$. Indeed, suppose that this is not the case, then there exists a facet $F$ of $\Delta$ which is not a branch of $F_1$ and which intersects some $G_k$ in codimension one. Since $G_k$ is a leaf and both, $F$ and $F_1$,  intersect $G_k$ in codimension one it follows that $F\sect G_k = F_1\sect G_k$, which implies that $F$ is a branch of $F_1$, a contradiction.
Now since $\mathcal{F}(\Delta)=\{F_1,G_1,\ldots,G_r\}$ it follows that $I=Pu$ where $P=(x_{i_{d+1}},x_{j_1},\ldots,x_{j_r})$ and $u=\prod_{k=1}^rx_{i_k}$, a contradiction. Therefore, we may assume that for some integer $1\leq s<r$,  $G_1,\ldots, G_s$ are non-leaves, while $G_{s+1},\ldots,G_r$ are leaves.  Removing the leaves $G_{s+1},\ldots,G_r$ from $\Delta$  we obtain a subcomplex $\Gamma$ which is again a tree and for which each leaf of $\Gamma$ is also  a leaf of $\Delta$. Since $\Gamma$ is tree it  has at least two leaves, one of them  being $F_1$ and  another leaf, say  $F_m$. Since $F_m\neq G_k$ for $k=1,\ldots,s$, it follows that $F_m$ is not a branch of $F_1$. Thus $\dim F_m\sect F_1<d-1$.

Assume now for simplicity that $F_1=\{x_1,\ldots,x_{d+1}\}$ and that $x_{d+1}$ is the  free vertex of $F_1$. In addition, we may also assume that $F_1\cap F_m\subseteq\{x_1,\ldots,x_{d-1}\}$,  and that there exist integers $i,j$ with $i\neq j$ and $i,j>d+1$ such that $x_i,x_j\in F_m$ and that $x_i$ is a free vertex of $F_m$. Consider the $K$-algebra
\[
A=K[x_{d+1},x_{d+2},\ldots, x_{j-1},x_{j+1}\ldots,x_n]/(x_{d+1}^{d+1},u_2,\ldots,u_m),
\]
where $u_k$ is the monomial obtained from $x_{F_k}$ by replacing each of the variables $x_1,\ldots,x_d$ with $x_{d+1}$, and replacing $x_j$ with $x_i$. Then it follows that $S/I(\Delta)$ is the polarization of the $K$-algebra $A$ and that $(S/I(\Delta)/({\bf z})(S/I(\Delta))\iso A$, where ${\bf z}=x_{d+1}-x_1,\ldots,x_{d+1}-x_d,x_i-x_j$,  see for example \cite[Proposition 1.6.2]{HH}. This shows that $\depth S/I(\Delta)>d$, since ${\bf z}$ is a regular sequence of length $d+1$ on $S/I(\Delta)$.
\end{proof}

The following examples show that both conditions, namely $\Delta$ being pure and $\Delta$ being connected in codimension one, are required for the implication (a)$\Rightarrow$(b) of Theorem~\ref{consttree}.

\begin{Example}\label{extrees}
{\em  (i)  Let $\Delta$ be the non--pure simplicial tree on the  vertex set $[5]$, whose facets are $\mathcal{F}(\Delta)=\{\{1,2,3\},\{1,5\},\{3,4\}\}$. Then  the  facet ideal $I(\Delta)=(x_1x_2x_3,x_1x_5,x_3x_4)$ is obviously not of the form given in Theorem~\ref{consttree}(b). By using CoCoA \cite{Co} one can easily see that $\depth S/I(\Delta)=2=5-3= n-\ell(I)$.  Since $\Delta$ is a simplicial tree, $\mathcal{R}(I(\Delta))$ is Cohen--Macaulay (see \cite[Corollary 4]{Fa}). Thus we may apply Corollary~\ref{constlim} and  obtain that the depth function of $I(\Delta)$ is constant.

(ii) Let $\Gamma$ be the pure simplicial tree on the vertex set $[7]$, whose facets are $\mathcal{F}(\Gamma)=\{\{1,2,3\},\{3,4,5\},\{5,6,7\}\}$. Observe that $\Gamma$ is not connected in codimension one and $I(\Gamma)$ is not of the form given in Theorem~\ref{consttree}(b). Nevertheless,  $\Gamma$ is a pure simplicial tree, so that  $\mathcal{R}(I(\Gamma))$ is Cohen--Macaulay. Thus we may again apply  Corollary~\ref{constlim}. Checking with CoCoA the relevant data  we see that the depth function of $I(\Gamma)$ is constant.
}
\end{Example}

\end{document}